\crefname{hypothesis}{Hypothesis}{Hypotheses}
\newtheorem{ass}{Assumption}
\newtheorem{defn}{Definition}
\title{Higher order approximation for stochastic space fractional wave equation forced by an additive space-time Gaussian noise\thanks{Submitted to the editors DATE.
\funding{This work was supported by the National Natural Science Foundation of China under Grant
No. 11671182, and the AI and Big Data Funds under Grant No. 2019620005000775.}}}
\author{Xing Liu\thanks{School of Mathematics and Statistics, Gansu Key Laboratory of Applied Mathematics and Complex
Systems, Lanzhou University, Lanzhou 730000, People's Republic of China
  (\email{2718826413@qq.com}).}
\and Weihua Deng\thanks{Corresponding author. School of Mathematics and Statistics, Gansu Key Laboratory of Applied Mathematics and Complex
Systems, Lanzhou University, Lanzhou 730000, People's Republic of China
  (\email{dengwh@lzu.edu.cn}).}}
\begin{document}

\maketitle

% REQUIRED
\begin{abstract}
The infinitesimal generator (fractional Laplacian) of a process obtained by subordinating a killed Brownian motion catches the power-law attenuation of wave propagation.  This paper studies the numerical schemes for the stochastic wave equation with fractional Laplacian as the space operator, the noise term of which is an infinite dimensional Brownian motion or fractional Brownian motion (fBm).
%In this paper, we study the numerical schemes for stochastic space fractional wave equation driven by an infinite dimensional Brownian motion or fractional Brownian motion (fBm).
Firstly, we establish the regularity of the mild solution of the stochastic fractional wave equation. Then a spectral Galerkin method is used for the approximation in space, and the space convergence rate is improved by postprocessing the infinite dimensional Gaussian noise.
%in discrete space is proposed.
%We improve the space convergence rate by postprocessing the infinite dimensional Gaussian noise; and the spatial convergence rate increases with the improvement of the regularity for the mild solution in space.
In the temporal direction, when the time derivative of the mild solution is bounded in the sense of mean-squared $L^p$-norm, we propose a modified stochastic trigonometric method, getting a higher strong convergence rate than the existing results, i.e., the time convergence rate is bigger than $1$. Particularly, for time discretization, the provided method can achieve an order of $2$ at the expenses of requiring some extra regularity to the mild solution. The theoretical error estimates are confirmed by numerical experiments.
%numerically solving the stochastic fractional wave equation.
\end{abstract}

% REQUIRED
\begin{keywords}
  spectral Galerkin method, modified stochastic trigonometric method, higher strong convergence rate, extra regularity
\end{keywords}

% REQUIRED
\begin{AMS}
  26A33, 65M60, 65L20, 65C30
\end{AMS}

\section{Introduction}

The wave propagation in ideal medium is well described by the classical wave equation $\partial^2 u(x,t)/ \partial t^2=\mathrm{\Delta}u(x,t)$. However, sometimes the classical wave equation fails to model the wave propagations in complex inhomogeneous media (e.g., viscous damping in the seismic isolation of buildings, medical ultrasound, and seismic wave propagation \cite{1,4,5,8}), because of their power-law attenuations. One of the most effective ways to characterize the wave propagation with power-law attenuations is to resort to the nonlocal operator --- the infinitesimal generator (fractional Laplacian) of a process obtained by subordinating a killed Brownian motion.

Currently, two stochastic processes are very popular:  one is killed subordinate Brownian motion, and the other is subordinate killed Brownian motion.
Let $D$ be a bounded region, $B(t)$ be a Brownian motion with $B(0)\in D$, and $\tau_D=\inf\{t>0: B(t)\notin D\}$. Denote $T_t$ as an $\alpha$-stable subordinator. The first stochastic process (killed subordinate Brownian motion) \cite{3} is defined as
\begin{equation*}
 X_1(t)=\left\{
\begin{array}{cc}
B(T_t),\quad & t<\tau_D,\\
\Theta,\quad & t\ge\tau_D,
\end{array}
 \right.
 \end{equation*}
where $\Theta$ is a coffin state, meaning that the subordinate Brownian motion will be killed when first leaving the domain $D$; while the second stochastic process (subordinate killed Brownian motion) \cite{7} is
\begin{equation*}
 X_2(t)=\left\{
\begin{array}{cc}
B(T_t),\quad &T_t<\tau_D,\\
\Theta,\quad &T_t\ge\tau_D
\end{array}
 \right.
 \end{equation*}
with $\Theta$ still being a coffin state, implying to subordinate a killed Brownian motion (when first leaving the domain $D$).
The infinitesimal generator of $X_1(t)$ has the form
\begin{equation*}
    (-\Delta_1)^{\alpha}u(x)=c_{n,\alpha}{\rm P.V.}\int_{\mathbb{R}^n}\frac{u(x)-u(y)}{|x-y|^{n+2\alpha}}dy,\quad \alpha\in (0,1),
\end{equation*}
where  $c_{n,\alpha}=\frac{2^{2\alpha}\alpha\Gamma(n/2+\alpha)}{\pi^{n/2}\Gamma(1-\alpha)}$, ${\rm P.V.}$ means the principal value integral, and $u(y)=0$ for $y \in \mathbb{R}^n \backslash D$. Denote the infinitesimal generator of $X_2(t)$ as $(-\Delta)^{\alpha}$ and $-\Delta$ the infinitesimal generator of killed Brownian motion. It shows that  \cite{6,7}  if $\{(\lambda_i,\phi_i)\}^\infty_{i=1}$ are the eigenpairs of $-\mathrm{\Delta}$, then $\{(\lambda^\alpha_i,\phi_i)\}^\infty_{i=1}$ are the eigenpairs of $(-\mathrm{\Delta})^\alpha$, i.e.,
 \begin{equation}\label{eq:1.03}
 \left\{
\begin{array}{cc}
 -\mathrm{\Delta}\phi_i=\lambda_i\phi_i, \quad &\mathrm{in} \ D,\\
  \quad \phi_i=0,\quad &\mathrm{on} \ \partial D,
 \end{array}
 \right.
 \end{equation}
and
\begin{equation}\label{eq:1.04}
 \left\{
\begin{array}{cc}
(-\mathrm{\Delta})^\alpha\phi_i =\lambda^\alpha_i\phi_i, \quad &\mathrm{in} \ D,\\
  \quad \phi_i=0,\quad &\mathrm{on} \  \partial D.
 \end{array}
 \right.
 \end{equation}

The operator used in this paper is the one defined in  \eqref{eq:1.04}.
%we discuss the nonlocal operator in \eqref{eq:1.04}.
Moreover, we are also concerned with the external noises that possibly affect the wave propagation. Two most popular external noises are white noise and fractional Gaussian noise, both of which are considered in this paper. %, including white noise and fractional Gaussian noise.
The fractional Gaussian noise is defined as the formal derivative of the fractional Brownian motion (fBm) $\beta_H(t)$, which is Gaussian process with an index $H\in(0,1)$. The fBm  has two unique properties: self-similarity and stationary increments \cite{26,27}. As $H=\frac{1}{2}$, the fBm reduces to a standard Brownian motion. The formal derivative of Brownian motion is white noise. For $H\ne\frac{1}{2}$, unlike
Brownian motion, the fBm exhibits long-range dependence: the behavior of the process after a given time $t$ depends on the situation at $t$ and the whole history
of the process up to time $t$ \cite{22}. According to the properties of the fBm and Brownian motion, one can choose the appropriate noise in practical applications.

With the above introduction of nonlocal operator and the external noise, the model we discuss in this paper is the stochastic wave equation
\begin{equation}\label{eq:1.05}
\left\{
\begin{array}{ll}
\frac{\mathrm{d} \dot{u}(x,t)}{\mathrm{d} t}
%\frac{\partial^2 u(x,t)}{\partial t^2}
=-(-\mathrm{\Delta})^\alpha u(x,t)+f\left(u(x,t)\right)+\dot{B}_H(x,t) \quad \mathrm{in} \ D\times(0,T],\\[1.5mm]
u(x,0)=u_0, \  \dot{u}(x,0)=v_0  %(\partial u(x,t)/\partial t)|_{t=0}=v_0
\quad  \mathrm{in}\ D,\\[1.5mm]
u(x,t)=0, \quad \mathrm{in}\ \partial D\times(0,T],
\end{array}
\right.
\end{equation}
where $\dot{u}(x,t)$ is the first order time derivative of $u(x,t)$, $d/dt$ means the partial derivative with respect to $t$, $f$ is the source term, $D\subset \mathbb{R}^d\ (d=1,2,3)$, and $\dot{B}_H(x,t)$ is the formal derivative of the infinite dimensional space-time Gaussian process $B_H(x,t)$ with $0<\alpha\le1$ and $\frac{1}{2}\le H<1$.

Over the last few decades, there is much progress in both strong and weak approximations of the stochastic wave equation driven by the space-time white noise. A full discretization of the stochastic wave equation driven by additive space-time white noise is presented with a spectral Galerkin approximation in space and a temporal approximation by exponential time integrators involving linear functionals of the white noise \cite{10}. In \cite{11,9}, the stochastic trigonometric method for solving the stochastic wave equation with multiplicative space-time white noise is studied in time. The work \cite{12} investigates a discrete approximation for the stochastic space-time fractional wave equation forced by an additive space-time white noise. In this paper, besides the additive white noise, the additive fractional Gaussian noise is also discussed. Moreover, if $\dot{u}(x,t)$  %$\partial u(x,t)/ \partial t$
is bounded in the sense of mean-squared $L^p$-norm, then by modifying the stochastic trigonometric method for time discretization, we can obtain a high order convergence rate. %a faster rate than $1$.
In particular, as $H=\frac{1}{2}$, we use the independent increment property of Brownian motion to obtain the optimal temporal error estimate; for $H\in\left(\frac{1}{2},1\right)$, by using the covariance of stochastic integral for fBm (Lemma \ref{le:02}), we obtain the optimal error estimate in time. For space approximation, the spectral Galerkin scheme is used; and the space convergence rate is improved by postprocessing the additive Gaussian noise.

This paper is organized as follows. In the next section, we introduce some notations and preliminaries, including assumptions and properties of fBm. In Section \ref{sec:3}, by using the Dirichlet eigenpairs, we present the regularity of the mild solution $u(x,t)$ and the time derivative $\dot{u}(x,t)$ %$\partial u(x,t)/ \partial t$
in the sense of mean-squared $L^p$-norm. In Section \ref{sec:4}, the spectral Galerkin spatial semidiscretization of \eqref{eq:1.05} and the postprocessing approach of the additive space-time Gaussian noise are discussed. In Section \ref{sec:5}, we modify the stochastic trigonometric method to obtain a high order temporal discretization of \eqref{eq:1.05}; and the convergence order for the proposed fully discrete scheme is derived. The numerical experiments are performed in Section \ref{sec:6}. We end the paper with some discussions in Section \ref{sec:7}.

\section{Notations and preliminaries}
\label{sec:2}

In this section, we gather preliminary results on the Dirichlet eigenpairs and fBm, which are commonly used in the paper.

Let $U=L^2(D;\mathbb{R})$ be a real separable Hilbert space with $L^2$ inner product $\langle\cdot,\cdot\rangle$ and the corresponding induced norm $\|\cdot\|$. We define the unbounded linear operator $A^\nu$ by $A^\nu u=\left(-\mathrm{\Delta}\right)^\nu u$ on the domain
\begin{equation*}
\mathrm{dom}\left(A^{\nu}\right)=\left\{ A^\nu u\in U:u(x)=0,\ x\in \partial D \right\}.
\end{equation*}
Then Equation \eqref{eq:1.04} implies that
\begin{equation*}
A^{\frac{\nu}{2}}\phi_{i}(x)=\lambda^{\frac{\nu}{2}}_{i}\phi_{i}(x)
\end{equation*}
and
\begin{equation*}
A^{\frac{\nu}{2}}u=\sum^\infty_{i=1}\lambda^{\frac{\nu}{2}}_{i}\left\langle u,\phi_{i}(x)\right\rangle\phi_{i}(x),
\end{equation*}
where $\phi_{i}(x)$, $i=1,2,\dots,$ denote the normalized eigenfunctions of the fractional Laplacian operator $\left(-\mathrm{\Delta}\right)^{\frac{\nu}{2}}$, and $\lambda^{\frac{\nu}{2}}_{i}$, $i=1,2,\dots,$ are the corresponding eigenvalues.
Moreover, we define the Hilbert space $\dot{U}^\nu=\mathrm{dom}\left(A^{\frac{\nu}{2}}\right)$
equipped with the inner product
\begin{equation*}
\left\langle u, v\right\rangle_{\nu}=\sum^\infty_{i=1}\lambda^{\frac{\nu}{2}}_{i}\left\langle u,\phi_{i}(x)\right\rangle\times\lambda^{\frac{\nu}{2}}_{i}\left\langle v,\phi_{i}(x)\right\rangle
\end{equation*}
and norm
\begin{equation*}
\|u\|_\nu=\left(\sum^\infty_{i=1}\lambda^{\nu}_{i}\left\langle u,\phi_{i}(x)\right\rangle^2\right)^{\frac{1}{2}}.
\end{equation*}
In particular, $\dot{U}^0=U$.

\begin{lemma} [\cite{13,14,15}] \label{le:01}
Let $\Omega$ denote a bounded domain in $\mathbb{R}^d$, $d\in\{1,2,3\}$, and $|\Omega|$ the volume of $\Omega$. Let $\lambda_{i}$ be the i-th eigenvalue of the Dirichlet homogeneous boundary problem for the Laplacian operator $-\mathrm{\Delta}$ in $\Omega$. %Let $|\Omega|$ be the volume of $\Omega$, the
Then
\begin{equation*}
C_0i^{\frac{2}{d}}\le\lambda_{i}\le C_1i^{\frac{2}{d}},
\end{equation*}
where $i\in\mathbb{N}$, and the constants $C_0$ and $C_1$ are independent of $i$.
\end{lemma}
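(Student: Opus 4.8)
The plan is to reduce the two‑sided estimate to the exactly solvable case of a rectangular box, exploiting the domain monotonicity of Dirichlet eigenvalues. Recall the Courant–Fischer min–max characterization $\lambda_i(\Omega)=\min_{\dim V=i}\max_{0\ne v\in V}\|\nabla v\|^2/\|v\|^2$, the minimum being over $i$‑dimensional subspaces $V\subset H_0^1(\Omega)$; since any $v\in H_0^1(\Omega_1)$ extends by zero to an element of $H_0^1(\Omega_2)$ with unchanged Rayleigh quotient, one gets $\Omega_1\subset\Omega_2\Rightarrow\lambda_i(\Omega_2)\le\lambda_i(\Omega_1)$ for every $i$, i.e.\ a larger domain has smaller eigenvalues.

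For the upper bound I would fix a cube $Q_a=(0,a)^d$ with $\overline{Q_a}\subset\Omega$ (possible since $\Omega$ is open and nonempty), so that $\lambda_i(\Omega)\le\lambda_i(Q_a)$. The Dirichlet eigenvalues of $Q_a$ are explicitly $\frac{\pi^2}{a^2}\bigl(n_1^2+\cdots+n_d^2\bigr)$ with $(n_1,\dots,n_d)\in\mathbb{N}^d$, so ordering them increasingly reduces everything to a lattice‑point count: the number of $n\in\mathbb{N}^d$ with $n_1^2+\cdots+n_d^2\le R^2$ lies between $\underline c_d R^d$ and $\overline c_d R^d$ once $R\ge\sqrt d$, for suitable dimensional constants. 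This gives $\lambda_i(Q_a)\le C a^{-2} i^{2/d}$, hence $\lambda_i(\Omega)\le C_1 i^{2/d}$. The lower bound is the same argument run in the other direction: since $\Omega$ is bounded, enclose it in a cube $Q_b$ of side $b$, so $\lambda_i(\Omega)\ge\lambda_i(Q_b)$, and the lower half of the same lattice‑point estimate yields $\lambda_i(Q_b)\ge c\, b^{-2} i^{2/d}$, whence $\lambda_i(\Omega)\ge C_0 i^{2/d}$.

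Two routine points remain, and neither is a genuine obstacle. First, the lattice‑point estimate must be transcribed into a bound on the \emph{ordered} eigenvalues: writing $N_Q(\lambda)=\#\{i:\lambda_i(Q)\le\lambda\}$, the count shows $\underline c\,\lambda^{d/2}\le N_Q(\lambda)\le\overline c\,\lambda^{d/2}$ for $\lambda\ge\lambda_1(Q)$, and inverting the elementary relations $N_Q(\lambda_i)\ge i$ and $\bigl(N_Q(\lambda)\ge i\Rightarrow\lambda_i\le\lambda\bigr)$ gives $\lambda_i(Q)\asymp i^{2/d}$. Second, these comparisons are cleanest for large $i$; but each $\lambda_i$ is strictly positive and $\lambda_i\to\infty$ monotonically, so the finitely many small indices are absorbed harmlessly into $C_0,C_1$, which are only required to be independent of $i$. (The statement is in any case the classical Weyl asymptotic law with Pólya–Li–Yau‑type two‑sided refinements, so an alternative is simply to invoke it from \cite{13,14,15}.)
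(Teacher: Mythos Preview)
Your argument is correct: domain monotonicity of Dirichlet eigenvalues together with the explicit spectrum of a cube and the elementary lattice-point count gives the two-sided bound $\lambda_i\asymp i^{2/d}$, and the cleanup for small $i$ is handled exactly as you say.

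The paper, however, does not prove this lemma at all; it simply quotes the result from the literature (Li--Yau, Laptev, and a standard PDE text), as the citation in the lemma heading indicates. So there is no ``paper's proof'' to compare against. What you have written is a self-contained elementary derivation, whereas the cited references obtain sharper statements: Li--Yau give the explicit lower constant $C_0=\frac{4\pi^2 d}{d+2}(\omega_d|\Omega|)^{-2/d}$ via a Fourier-analytic argument on the heat kernel, and Laptev extends this. Your comparison-to-cubes route is more elementary and entirely sufficient for the use made of the lemma here (only the growth order $i^{2/d}$ matters in the convergence-rate analysis, never the precise constants), but it yields worse constants than the cited sharp inequalities.
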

%The proof of this Lemma can be found in \cite{13,14,15}.

\begin{ass}\label{as:2.1}
The function $f:U\to U$ in (\ref{eq:1.05}) satisfies
\begin{equation*}
\|f(u)-f(v)\| \lesssim \|u-v\| ~ {\rm for ~any} ~u,v\in U,
\end{equation*}
and
\begin{equation*}
\|A^{\frac{\nu}{2}} f(u)\|\lesssim 1+\|A^{\frac{\nu}{2}} u\|  ~ {\rm for } ~ u\in \dot{U}^\nu~ {\rm with } ~ \nu\ge0.
\end{equation*}
\end{ass}
For later use, we collect concepts of fBm; for more details, one
can refer to \cite{16,23,21,18,17}.
\begin{defn}\label{de:2.1}
Let $\beta_H(t)$ be the two-sided one-dimensional fBm with Hurst index $H\in(0,1)$ and $t\in\mathbb{R}$. The stochastic process $\beta_H(t)$ is characterized by the properties:

$\mathrm{(i)}$
$\beta_H(0)=0$;

$\mathrm{(ii)}$
$\mathrm{E}\left[\beta_H(t)\right]=0$, \ $t\in\mathbb{R}$;

$\mathrm{(iii)}$
$\mathrm{E}\left[\beta_H(t)\beta_H(s)\right]=\frac{1}{2}\left(|t|^{2H}+|s|^{2H}-|t-s|^{2H}\right)$, \ $t,\ s\in\mathbb{R}$,
\end{defn}
%In Definition \ref{de:2.1},
where $\mathrm{E}$ denotes the expectation. As $H=\frac{1}{2}$, $\beta_H(t)$ is a standard Brownian motion, being a process with independent increment.

\begin{ass}\label{as:2.2}
Let driven stochastic process $B_{H}(x,t)$ be a cylindrical fBm with respect to the normal filtration $\{\mathcal{F}_t\}_{t\in[0,T]}$. The
infinite dimensional space-time stochastic process can be represented by the formal series
\begin{equation*}
B_{H}(x,t)=\sum^\infty_{i=1}\sigma_{i}\beta^{i}_{H}(t)\phi_{i}(x),
\end{equation*}
where $|\sigma_{i}|\lesssim\lambda_i^{-\rho}$ ($\rho\ge0$, $\lambda_i$ is given in Lemma \ref{le:01}), $\beta^{i}_{H}(t)$, $i=1,2,\dots,$ are mutually independent real-valued fractional Brownian motions with $\frac{1}{2}\le H<1$, and $\{\phi_{i}(x)\}_{i\in\mathbb{N}}$ is an orthonormal basis of $U$.
\end{ass}

We define $L^p(D,\dot{U}^\nu)$ to be the separable Hilbert space of $p$-times integrable random variables with norm
\begin{equation*}
\left\|u\right\|_{L^p(D,\dot{U}^\nu)}=\left(\mathrm{E}\left[\left\|u\right\|^p_{\nu}\right]\right)^{\frac{1}{p}}, \quad \nu\ge0.
\end{equation*}

\begin{lemma} [\cite{17}]\label{le:02}
For $f, g\in L^2(\mathbb{R};\mathbb{R})\cap L^1(\mathbb{R};\mathbb{R})$, as $\frac{1}{2}<H<1$, we have
\begin{equation*}
\mathrm{E}\left[\int_Rf(s)\mathrm{d}\beta_H(s)\right]=0
\end{equation*}
and
\begin{equation*}
\mathrm{E}\left[\int_Rf(s)\mathrm{d}\beta_H(s)\int_Rg(t)\mathrm{d}\beta_H(t)\right]=H(2H-1)\int_R\int_R\mathrm{E}\left[f(s)g(t)\right]|s-t|^{2H-2}\mathrm{d}s\mathrm{d}t.
\end{equation*}
\end{lemma}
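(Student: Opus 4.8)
The plan is to verify the identity first for step functions, where it reduces to the explicit covariance of the fBm, and then to pass to general $f,g\in L^2(\mathbb{R};\mathbb{R})\cap L^1(\mathbb{R};\mathbb{R})$ by an isometry/density argument. Since the integrands appearing in this paper are deterministic, I treat $f,g$ as deterministic functions, so that $\mathrm{E}[f(s)g(t)]=f(s)g(t)$, and comment on the genuinely random case at the end. Recall that for a step function $f=\sum_{i=1}^{n} a_i\mathbf{1}_{(u_{i-1},u_i]}$ the Wiener integral is defined by $\int_{\mathbb{R}}f\,\mathrm{d}\beta_H:=\sum_{i=1}^{n} a_i\bigl(\beta_H(u_i)-\beta_H(u_{i-1})\bigr)$. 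As $f\mapsto\int_{\mathbb{R}}f\,\mathrm{d}\beta_H$ is linear on step functions and both sides of the claimed covariance identity are bilinear in $(f,g)$, it suffices to check the identity for the indicators $f=\mathbf{1}_{(a,b]}$ and $g=\mathbf{1}_{(c,d]}$.

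For such indicators the left-hand side equals $\mathrm{E}\bigl[(\beta_H(b)-\beta_H(a))(\beta_H(d)-\beta_H(c))\bigr]=R_H(b,d)-R_H(b,c)-R_H(a,d)+R_H(a,c)$, where $R_H(s,t)=\tfrac12\bigl(|s|^{2H}+|t|^{2H}-|s-t|^{2H}\bigr)$ is the covariance function of Definition~\ref{de:2.1}(iii). Differentiating off the diagonal, the pure terms $|s|^{2H}$ and $|t|^{2H}$ are annihilated by the mixed derivative and one finds $\partial_s\partial_t R_H(s,t)=H(2H-1)|s-t|^{2H-2}$. Since $2H-2\in(-1,0)$, the kernel $|s-t|^{2H-2}$ is locally integrable, and the fundamental theorem of calculus gives $\int_a^b\!\int_c^d H(2H-1)|s-t|^{2H-2}\,\mathrm{d}t\,\mathrm{d}s=R_H(b,d)-R_H(b,c)-R_H(a,d)+R_H(a,c)$, which matches the left-hand side; by bilinearity the identity holds for all step functions. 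The zero-mean assertion is immediate in this case, as $\int_{\mathbb{R}}f\,\mathrm{d}\beta_H$ is a finite linear combination of increments of the centered Gaussian process $\beta_H$.

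To extend both assertions to $f,g\in L^2\cap L^1$, introduce the bilinear form $\langle f,g\rangle_{\mathcal H}:=H(2H-1)\int_{\mathbb{R}}\int_{\mathbb{R}}f(s)g(t)|s-t|^{2H-2}\,\mathrm{d}s\,\mathrm{d}t$, which for $f=g$ is non-negative, being the second moment of $\int_{\mathbb{R}}f\,\mathrm{d}\beta_H$ for step $f$. One checks it is finite on $L^2\cap L^1$: writing $|s-t|^{2H-2}=|s-t|^{-(2-2H)}$ with $2-2H\in(0,1)$, the Hardy--Littlewood--Sobolev inequality yields $\int\!\int|f(s)||g(t)||s-t|^{-(2-2H)}\,\mathrm{d}s\,\mathrm{d}t\lesssim\|f\|_{L^p}\|g\|_{L^q}$ for any $p,q\in(1,\infty)$ with $\tfrac1p+\tfrac1q=2H$; since $2H\in(1,2)$ one may pick $p,q\in(1,2)$ and control these norms by interpolation between $\|\cdot\|_{L^1}$ and $\|\cdot\|_{L^2}$ (alternatively, split the integral over $\{|s-t|\le 1\}$ and $\{|s-t|>1\}$ and use Young's inequality and Cauchy--Schwarz, respectively). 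Step functions are dense in $L^2\cap L^1$ for the norm $\|\cdot\|_{\mathcal H}=\langle\cdot,\cdot\rangle_{\mathcal H}^{1/2}$, and the step-function computation shows $f\mapsto\int_{\mathbb{R}}f\,\mathrm{d}\beta_H$ is an isometry from step functions, equipped with $\|\cdot\|_{\mathcal H}$, into $L^2(\Omega)$; hence it extends to all of $L^2\cap L^1$, the extension remains a centered Gaussian variable (an $L^2(\Omega)$-limit of centered Gaussians), and both sides of the covariance identity pass to the limit, which proves the lemma.

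The main obstacle is this extension step: one must show that the singular kernel $|s-t|^{2H-2}$ pairs $L^1\cap L^2$ functions into a finite quantity and that step functions are dense for the induced norm, so that the Wiener integral extends isometrically --- this is exactly where the hypothesis $H>\tfrac12$ (making $2H-2>-1$) and the Hardy--Littlewood--Sobolev inequality are used. If one wishes the statement literally for \emph{random} integrands, with the $\mathrm{E}[f(s)g(t)]$ genuinely present, one instead works within the Malliavin (Skorokhod) calculus for fBm and invokes the corresponding isometry for the divergence operator; for the deterministic integrands used throughout this paper that expectation is vacuous and the argument above is enough.
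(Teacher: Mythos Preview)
The paper does not prove this lemma at all: it is stated with a citation to Mishura's monograph \cite{17} and used as a black box throughout, so there is no ``paper's own proof'' to compare against. Your argument is the standard construction of the Wiener integral with respect to fBm for $H>\tfrac12$ --- verify the covariance identity on indicators via $\partial_s\partial_t R_H(s,t)=H(2H-1)|s-t|^{2H-2}$, check finiteness of the bilinear form on $L^1\cap L^2$ (your Hardy--Littlewood--Sobolev step with $p=q=1/H\in(1,2)$ is clean), and extend by isometry and density. This is correct and is essentially how the cited reference builds the integral. Your remark that the $\mathrm{E}[f(s)g(t)]$ on the right-hand side is vacuous for the deterministic integrands actually used in the paper is also on point; the genuinely random case would indeed require the divergence-operator machinery, but that is not needed here.
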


For $H\in\left(\frac{1}{2},1\right)$ and that $\int^t_sf(r)(r-s)^{H-\frac{3}{2}}\left(\frac{s}{r}\right)^{\frac{1}{2}-H}\mathrm{d}r$ belongs to $L^2\left([0,T],\mathbb{R}\right)$, an expression of the Wiener integral with respect to fBm is introduced as \cite{19,20}
\begin{equation*}
\int^t_0f(s)\mathrm{d}\beta_H(s)=C_H\int^t_0\int^t_s\left(H-\frac{1}{2}\right)f(r)(r-s)^{H-\frac{3}{2}}\left(\frac{s}{r}\right)^{\frac{1}{2}-H}\mathrm{d}r\mathrm{d}\beta(s),\ s,t\in[0,T],
\end{equation*}
where $\beta(t)$ is standard Brownian motion, and
\begin{equation*}
C_H=\left(\frac{2H\times\Gamma(\frac{3}{2}-H)}{\Gamma(H+\frac{1}{2})\Gamma(2-2H)}\right)^{\frac{1}{2}}.
\end{equation*}

\section{Regularity of the solution} \label{sec:3}
To begin with we can give a system of equations by coupling \eqref{eq:1.05} and $\mathrm{d}u(x,t)=\dot{u}(x,t)\mathrm{d}t$. The system of equations is beneficial to analyze the regularity of the mild solution of \eqref{eq:1.05}, including existence, uniqueness, and time H\"older continuity. Moreover, the system of equations is transformed into an equivalent form, which will be used to obtain the approximation of \eqref{eq:1.05}.

In the interest of brevity and readability, we use the following equation instead of \eqref{eq:1.05}
\begin{equation}\label{eq:3.1}
\left\{
\begin{array}{ll}
\mathrm{d} \dot{u}(t)=-A^\alpha  u(t)\mathrm{d}t+f\left(u(t)\right)\mathrm{d}t+\mathrm{d}B_H(t),& \quad \mathrm{in} \ D\times(0,T],\\[1mm]
u(0)=u_0, \dot{u}(0)=v_0& \quad  \mathrm{in}\ D,\\[1mm]
u(t)=0,& \quad \mathrm{in}\ \partial D,
\end{array}
\right.
\end{equation}
where $u(t)=u(x,t)$ and $B_H(t)=B_H(x,t)$. Let $v(t)=\dot{u}(t)$. Then

\begin{equation}\label{eq:3.2}
\mathrm{d}X(t)=\Lambda X(t)\mathrm{d}t+
\left[\begin{array}{c}
0\\
f(u(t))
\end{array} \right]\mathrm{d}t+
\left[\begin{array}{c}
0\\
I
\end{array} \right]\mathrm{d}B_H(t),
\end{equation}
where
\begin{equation*}
X(t)=
\left[\begin{array}{c}
u(t)\\
v(t)
\end{array} \right],\qquad
\Lambda=
\left[\begin{array}{cc}
0& I\\
-A^{\alpha}&0
\end{array}\right].
\end{equation*}
Then a formal mild solution $X(t)$ for \eqref{eq:3.2} is given as
\begin{equation}\label{eq:3.3}
X(t)=\mathrm{e}^{\Lambda t}X(0)+\int^t_0\mathrm{e}^{\Lambda (t-s)}
\left[\begin{array}{c}
0\\
f\left(u(s)\right)
\end{array} \right]\mathrm{d}s+\int^t_0\mathrm{e}^{\Lambda (t-s)}\left[\begin{array}{c}
0\\
I
\end{array} \right]\mathrm{d}B_H(s),
\end{equation}
where $\mathrm{e}^{\Lambda t}$ can be expressed as
\begin{equation}\label{eq:3.3-1}
\mathrm{e}^{\Lambda t}=
\left[\begin{array}{cc}
\cos\left(A^{\frac{\alpha}{2}}t\right)&A^{-\frac{\alpha}{2}}\sin\left(A^{\frac{\alpha}{2}}t\right)\\[1mm]
-A^{\frac{\alpha}{2}}\sin\left(A^{\frac{\alpha}{2}}t\right)&\cos\left(A^{\frac{\alpha}{2}}t\right)
\end{array}\right].
\end{equation}
The definitions of cosine operator $\cos\left(A^{\frac{\alpha}{2}}t\right)$ and sine operator $\sin\left(A^{\frac{\alpha}{2}}t\right)$ are given in Appendix \ref{sec:A}. Substituting \eqref{eq:3.3-1} into \eqref{eq:3.3}, then two components of $X(t)$ are obtained as
\begin{equation}\label{eq:3.4}
\begin{split}
u(t)=&\cos\left(A^{\frac{\alpha}{2}}t\right)u_0+A^{-\frac{\alpha}{2}}\sin\left(A^{\frac{\alpha}{2}}t\right)v_0+\int^t_0A^{-\frac{\alpha}{2}}\sin\left(A^{\frac{\alpha}{2}}(t-s)\right)f\left(u(s)\right)\mathrm{d}s\\[1.5mm]
&+\int^t_0A^{-\frac{\alpha}{2}}\sin\left(A^{\frac{\alpha}{2}}(t-s)\right)\mathrm{d}B_H(s),\\[1.5mm]
v(t)=&-A^{\frac{\alpha}{2}}\sin\left(A^{\frac{\alpha}{2}}t\right)u_0+\cos\left(A^{\frac{\alpha}{2}}t\right)v_0+\int^t_0\cos\left(A^{\frac{\alpha}{2}}(t-s)\right)f\left(u(s)\right)\mathrm{d}s\\[1.5mm]
&+\int^t_0\cos\left(A^{\frac{\alpha}{2}}(t-s)\right)\mathrm{d}B_H(s).
\end{split}
\end{equation}

In order to obtain the regularity of $u(t)$ and $v(t)$, we need to consider the regularity estimate of the stochastic integral in \eqref{eq:3.4}. For $\frac{1}{2}\le H<1$ and $p>1$, the Burkh\"older-Davis-Gundy inequality \cite{25,24} implies
\begin{eqnarray}\label{eq:3.5}
&\mathrm{E}&\left[\left\|\int^t_0A^{\frac{\gamma-\alpha}{2}}\sin\left(A^{\frac{\alpha}{2}}(t-s)\right)\mathrm{d}B(s)\right\|^p\right]\\
&\le&C_p\left(\int^t_0\sum^\infty_{i=1}\left|\lambda_i^{\frac{\gamma-\alpha-2\rho}{2}}\sin\left(\lambda_i^{\frac{\alpha}{2}}(t-s)\right)\right|^2\mathrm{d}s\right)^{\frac{p}{2}}\nonumber\\
&\le&C_p\left(t\sum^\infty_{i=1}i^{\frac{2(\gamma-\alpha-2\rho)}{d}}\right)^{\frac{p}{2}}\nonumber
 \end{eqnarray}
 and
 \begin{equation}\label{eq:3.5-1}
 \begin{split}
&\mathrm{E}\left[\left\|\int^t_0A^{\frac{\gamma-\alpha}{2}}\sin\left(A^{\frac{\alpha}{2}}(t-s)\right)\mathrm{d}B_H(s)\right\|^p\right]\\
&=\mathrm{E}\left[\left\|C_H\left(H-\frac{1}{2}\right)\int^t_0\int^t_sA^{\frac{\gamma-\alpha}{2}}\sin\left(A^{\frac{\alpha}{2}}(t-r)\right)\left(\frac{s}{r}\right)^{\frac{1}{2}-H}(r-s)^{H-\frac{3}{2}}\mathrm{d}r\mathrm{d}B(s)\right\|^p\right]\\
&\le C^p_H\left(H-\frac{1}{2}\right)^pC_p\left(\int^t_0\sum\limits^\infty_{i=1}\lambda_i^{\gamma-\alpha-2\rho}\left(\int^t_s\left(\frac{s}{r}\right)^{\frac{1}{2}-H}(r-s)^{H-\frac{3}{2}}\mathrm{d}r\right)^2\mathrm{d}s\right)^{\frac{p}{2}}\\
&\le C^p_H\left(H-\frac{1}{2}\right)^pC_p\left(\int^t_0\sum\limits^\infty_{i=1}\lambda_i^{\gamma-\alpha-2\rho}\left(\frac{s}{t}\right)^{1-2H}\left(\int^t_s(r-s)^{H-\frac{3}{2}}\mathrm{d}r\right)^2\mathrm{d}s\right)^{\frac{p}{2}}\\
&\le C^p_H\left(\frac{1}{2-2H}\right)^pC_p\left(t^{2H}\sum\limits^\infty_{i=1}i^{\frac{2(\gamma-\alpha-2\rho)}{d}}\right)^{\frac{p}{2}}.
 \end{split}
 \end{equation}
When $\frac{2(\gamma-\alpha-2\rho)}{d}<-1$, then the infinite series $\sum\limits^\infty_{i=1}i^{\frac{2(\gamma-\alpha-2\rho)}{d}}<\infty$.

One can obtain the following regularity results of the mild solution $u(t)$ and $v(t)$ by using the above estimates, Lemma \ref{le:01}, and \eqref{eq:3.4}.
\begin{theorem}\label{th:1}
Suppose that Assumptions \ref{as:2.1}-\ref{as:2.2} are satisfied, $\left\|u_0\right\|_{L^p(D,\dot{U}^\gamma)}<\infty$, $\left\|v_0\right\|_{L^p(D,\dot{U}^{\gamma-\alpha})}<\infty$, $\varepsilon>0$, $\gamma=\alpha+2\rho-\frac{d+\varepsilon}{2}$, and $\gamma>0$.
Then there exists a unique mild solution $X(t)$ for \eqref{eq:3.2} and
\begin{equation}\label{eq:3.6}
\left\|u(t)\right\|_{L^p(D,\dot{U}^\gamma)}+\left\|v(t)\right\|_{L^p(D,\dot{U}^{\gamma-\alpha})}\lesssim \frac{t^H}{\varepsilon}+\left\|u_0\right\|_{L^p(D,\dot{U}^\gamma)}+\left\|v_0\right\|_{L^p(D,\dot{U}^{\gamma-\alpha})}.
\end{equation}
Furthermore,

$\mathrm{(i)}$ for $\gamma\le\alpha$,
\begin{equation*}
\left\|u(t)-u(s)\right\|_{L^2(D,U)}\lesssim (t-s)^{\frac{\gamma}{\alpha}}\left(\frac{t^H}{\varepsilon}+\left\|u_0\right\|_{L^2(D,\dot{U}^\gamma)}+\left\|v_0\right\|_{L^2(D,\dot{U}^{\gamma-\alpha})}\right);
\end{equation*}

$\mathrm{(ii)}$ for $\gamma>\alpha$,
\begin{equation*}
\left\|u(t)-u(s)\right\|_{L^2(D,U)}\lesssim (t-s)\left(t^H+\left\|u_0\right\|_{L^2(D,\dot{U}^\gamma)}+\left\|v_0\right\|_{L^2(D,\dot{U}^{\gamma-\alpha})}\right).
\end{equation*}

\end{theorem}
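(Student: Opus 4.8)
\emph{Sketch of approach.} The plan is to read all three assertions off the mild-solution representation \eqref{eq:3.4} (equivalently the vector form \eqref{eq:3.3}), treating the deterministic terms by elementary mode-by-mode estimates and the stochastic terms by the Burkh\"older-Davis-Gundy bounds \eqref{eq:3.5}--\eqref{eq:3.5-1} recorded above. For existence and uniqueness I would set up a Banach fixed-point argument for the map $\mathcal{T}$ given by the right-hand side of the first line of \eqref{eq:3.4}, acting on $C\!\left([0,T];L^p(D,\dot{U}^{\gamma})\right)$ (inserting a weight $\mathrm{e}^{-Lt}$ if one prefers a genuine contraction to a contracting iterate). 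That $\mathcal{T}$ maps this space into itself uses $\|\cos(A^{\alpha/2}t)\|_{\dot{U}^{\gamma}\to\dot{U}^{\gamma}}\le1$ and $\|A^{-\alpha/2}\sin(A^{\alpha/2}t)\|_{\dot{U}^{\gamma-\alpha}\to\dot{U}^{\gamma}}\le1$ (immediate from $|\cos|,|\sin|\le1$ mode-by-mode) for the initial data, the growth bound of Assumption \ref{as:2.1} for the $f$-convolution, and the finiteness of the stochastic convolution, which is exactly \eqref{eq:3.5}/\eqref{eq:3.5-1}; these are available precisely because $\gamma=\alpha+2\rho-\tfrac{d+\varepsilon}{2}$ makes $\tfrac{2(\gamma-\alpha-2\rho)}{d}=-1-\tfrac{\varepsilon}{d}<-1$, so $\sum_i i^{2(\gamma-\alpha-2\rho)/d}<\infty$. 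Contractivity follows from $\|f(u)-f(v)\|\lesssim\|u-v\|$ together with the smoothing bound on $A^{-\alpha/2}\sin(A^{\alpha/2}(t-s))$, since the noise and initial-data terms cancel in $\mathcal{T}X-\mathcal{T}Y$, and a Gr\"onwall argument closes it; when $\gamma>\alpha$ the cleanest route is to solve first in $C([0,T];L^p(D,U))$ and then bootstrap to $\dot{U}^{\gamma}$ using the second bound of Assumption \ref{as:2.1} with $\nu=\gamma-\alpha$.

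For the bound \eqref{eq:3.6} I would estimate \eqref{eq:3.4} term by term. The two initial-data terms of $u(t)$ are controlled by $\|u_0\|_{L^p(D,\dot{U}^{\gamma})}$ and $\|v_0\|_{L^p(D,\dot{U}^{\gamma-\alpha})}$ via the operator bounds above, and those of $v(t)$ likewise, using additionally $\|A^{\alpha/2}\sin(A^{\alpha/2}t)\|_{\dot{U}^{\gamma}\to\dot{U}^{\gamma-\alpha}}\le1$. For the $f$-convolution, Assumption \ref{as:2.1} (the first inequality if $\gamma\le\alpha$, the second with $\nu=\gamma-\alpha$ if $\gamma>\alpha$) gives a bound $\lesssim\int_0^t\bigl(1+\|u(s)\|_{L^p(D,\dot{U}^{\gamma})}\bigr)\mathrm{d}s$, absorbed by Gr\"onwall. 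For the stochastic convolution, \eqref{eq:3.5-1} (for $\tfrac12<H<1$) and \eqref{eq:3.5} (for $H=\tfrac12$, where $t^{2H}=t$) yield $\mathrm{E}\bigl[\|\cdot\|_\gamma^p\bigr]^{1/p}\lesssim t^{H}\bigl(\sum_i i^{-1-\varepsilon/d}\bigr)^{1/2}$, and $\sum_i i^{-1-\varepsilon/d}\le1+d/\varepsilon\lesssim1/\varepsilon$ produces the $t^{H}/\varepsilon$ term; the $v(t)$ stochastic term is identical with $\cos$ in place of $\sin$. Summing gives \eqref{eq:3.6}.

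For the H\"older continuity ($p=2$) I would expand $u(t)-u(s)$ from \eqref{eq:3.4} into four differences. The workhorse estimate for the deterministic parts is $|\cos x-\cos y|\le\min(2,|x-y|)\le2|x-y|^{\theta}$ (and the same for $\sin$), applied mode-by-mode with $x=\lambda_i^{\alpha/2}t$, $y=\lambda_i^{\alpha/2}s$ and $\theta=\gamma/\alpha\in(0,1]$ in case (i), $\theta=1$ in case (ii); this turns $|\cos(\lambda_i^{\alpha/2}t)-\cos(\lambda_i^{\alpha/2}s)|$ into $\lesssim\lambda_i^{\gamma/2}(t-s)^{\gamma/\alpha}$ (resp.\ $\lambda_i^{\alpha/2}(t-s)$), so the $u_0$-difference is $\lesssim(t-s)^{\gamma/\alpha}\|u_0\|_{\gamma}$ (resp.\ $(t-s)\|u_0\|_{\alpha}\le(t-s)\|u_0\|_{\gamma}$), and the $v_0$- and $f$-differences are handled the same way, the latter also invoking the bound on $\|u(s)\|$ just obtained. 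For the stochastic difference I split $\int_0^t-\int_0^s=\int_s^tA^{-\alpha/2}\sin(A^{\alpha/2}(t-r))\,\mathrm{d}B_H(r)+\int_0^s\bigl[A^{-\alpha/2}\sin(A^{\alpha/2}(t-r))-A^{-\alpha/2}\sin(A^{\alpha/2}(s-r))\bigr]\mathrm{d}B_H(r)$ and apply the covariance identity (Lemma \ref{le:02} for $H>\tfrac12$, the It\^o isometry for $H=\tfrac12$) to each piece: the second piece carries, via the $\sin$-difference estimate above, a factor $(t-s)^{\gamma/\alpha}$ (resp.\ $(t-s)$) and magnitude $\lesssim t^{H}/\varepsilon$ (resp.\ $\lesssim t^{H}$, since for $\gamma>\alpha$ the exponent $2\rho>\tfrac{d}{2}$ keeps $\sum_i i^{-4\rho/d}$ bounded independently of $\varepsilon$), while the first piece, using $\sin^2 x\le\min(x^2,1)\le x$ with the relevant smoothing power of $A^{-\alpha/2}\sin$, is of the same or smaller order; one checks the $i$-series stay convergent under $\gamma=\alpha+2\rho-\tfrac{d+\varepsilon}{2}$ throughout. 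Collecting the four contributions gives (i) and (ii).

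I expect the main obstacle to be the mismatch between the \emph{Lipschitz} hypothesis on $f$, which Assumption \ref{as:2.1} supplies only in the base norm $\|\cdot\|$, and the need to solve the fixed-point equation and close the Gr\"onwall estimate for \eqref{eq:3.6} in the stronger space $\dot{U}^{\gamma}$ when $\gamma>\alpha$, where $A^{-\alpha/2}\sin(A^{\alpha/2}\cdot)$ no longer maps $U$ into $\dot{U}^{\gamma}$ with uniform bound; this is resolved by the two-step procedure (solve in the base space, then bootstrap with the growth bound $\|A^{\nu/2}f(u)\|\lesssim1+\|A^{\nu/2}u\|$). A secondary bookkeeping nuisance is tracking the $\varepsilon$-dependence through the near-divergence of $\sum_i i^{-1-\varepsilon/d}$ as $\varepsilon\downarrow0$, which is exactly what manufactures the $1/\varepsilon$ in \eqref{eq:3.6} and its absence in case (ii).
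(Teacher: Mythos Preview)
Your proposal is correct and follows essentially the same route as the paper: term-by-term estimates on the mild formula \eqref{eq:3.4}, the trigonometric bounds $|\cos a-\cos b|,|\sin a-\sin b|\lesssim|a-b|^{\theta}$ applied mode-by-mode, the Burkh\"older--Davis--Gundy estimates \eqref{eq:3.5}--\eqref{eq:3.5-1} together with Lemma~\ref{le:02} for the stochastic pieces, and a Gr\"onwall closure. The paper writes the increment $u(t)-u(s)$ as six terms $I_1,\dots,I_6$ (splitting both the $f$-convolution and the stochastic convolution into a tail over $[s,t]$ and a kernel-difference over $[0,s]$), while you group them into four and then split the stochastic one; and the paper takes existence/uniqueness for granted whereas you supply a fixed-point argument---but these are organizational rather than substantive differences.
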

\begin{proof}
Let us start with the estimate of $A^{\frac{\gamma}{2}}u(t)$ in $L^p(D,U)$ norm. Combining the triangle inequality, \eqref{eq:3.5}, \eqref{eq:3.5-1}, the expression of $u(t)$ in \eqref{eq:3.4}, and the assumption of $f$, we obtain
\begin{eqnarray*}
&&\left\|A^{\frac{\gamma}{2}}u(t)\right\|_{L^p(D,U)}\\
&&\lesssim\left\|A^{\frac{\gamma}{2}}\cos\left(A^{\frac{\alpha}{2}}t\right)u_0\right\|_{L^p(D,U)}+\left\|A^{\frac{\gamma-\alpha}{2}}\sin\left(A^{\frac{\alpha}{2}}t\right)v_0\right\|_{L^p(D,U)}\\
&&~~~+\left\|\int^t_0A^{\frac{\gamma-\alpha}{2}}\sin\left(A^{\frac{\alpha}{2}}(t-s)\right)f\left(u(s)\right)\mathrm{d}s\right\|_{L^p(D,U)}\\
&&~~~+\left\|\int^t_0A^{\frac{\gamma-\alpha}{2}}\sin\left(A^{\frac{\alpha}{2}}(t-s)\right)\mathrm{d}B_H(s)\right\|_{L^p(D,U)}\\
&&\lesssim\left\|A^{\frac{\gamma}{2}}u_0\right\|_{L^p(D,U)}+\left\|A^{\frac{\gamma-\alpha}{2}}v_0\right\|_{L^p(D,U)}+\int^t_0\left\|A^{\frac{\gamma}{2}}u(s)\right\|_{L^p(D,U)}\mathrm{d}s+\frac{t^H}{\varepsilon}.
\end{eqnarray*}
The application of Gr\"onwall's inequality leads to
\begin{equation*}
\left\|A^{\frac{\gamma}{2}}u(t)\right\|_{L^p(D,U)}
\lesssim\left\|A^{\frac{\gamma}{2}}u_0\right\|_{L^p(D,U)}+\left\|A^{\frac{\gamma-\alpha}{2}}v_0\right\|_{L^p(D,U)}+\frac{t^H}{\varepsilon}.
\end{equation*}
The bound of $\left\|v(t)\right\|_{L^p(D,\dot{U}^{\gamma-\alpha})}$ can be achieved in the same way, that is
\begin{equation*}
\left\|A^{\frac{\gamma-\alpha}{2}}v(t)\right\|_{L^p(D,U)}
\lesssim\left\|A^{\frac{\gamma}{2}}u_0\right\|_{L^p(D,U)}+\left\|A^{\frac{\gamma-\alpha}{2}}v_0\right\|_{L^p(D,U)}+\frac{t^H}{\varepsilon}.
\end{equation*}
Then using above estimates leads to
\begin{equation*}
\left\|u(t)\right\|_{L^p(D,\dot{U}^\gamma)}+\left\|v(t)\right\|_{L^p(D,\dot{U}^{\gamma-\alpha})}\lesssim \frac{t^H}{\varepsilon}+\left\|u_0\right\|_{L^p(D,\dot{U}^\gamma)}+\left\|v_0\right\|_{L^p(D,\dot{U}^{\gamma-\alpha})}.
\end{equation*}
Next, we discuss the time H\"older continuity of $u(t)$. % is discussed.
Equation \eqref{eq:3.4} implies
\begin{eqnarray*}
&\mathrm{E}&\left[\left\|u(t)-u(s)\right\|^2\right]\\
&\lesssim&\mathrm{E}\left[\left\|\left(\cos\left(A^{\frac{\alpha}{2}}t\right)-\cos\left(A^{\frac{\alpha}{2}}s\right)\right)u_0\right\|^2+\left\|A^{-\frac{\alpha}{2}}\left(\sin\left(A^{\frac{\alpha}{2}}t\right)-\sin\left(A^{\frac{\alpha}{2}}s\right)\right)v_0\right\|^2\right]\\
&+&\mathrm{E}\left[\left\|\int^t_sA^{-\frac{\alpha}{2}}\sin\left(A^{\frac{\alpha}{2}}(t-r)\right)f\left(u(r)\right)\mathrm{d}r\right\|^2+\left\|\int^t_sA^{-\frac{\alpha}{2}}\sin\left(A^{\frac{\alpha}{2}}(t-r)\right)\mathrm{d}B_H(r)\right\|^2\right]\\
&+&\mathrm{E}\left[\left\|\int^s_0A^{-\frac{\alpha}{2}}\left(\sin\left(A^{\frac{\alpha}{2}}(t-r)\right)-\sin\left(A^{\frac{\alpha}{2}}(s-r)\right)\right)f\left(u(r)\right)\mathrm{d}r\right\|^2\right]\\
&+&\mathrm{E}\left[\left\|\int^s_0A^{-\frac{\alpha}{2}}\left(\sin\left(A^{\frac{\alpha}{2}}(t-r)\right)-\sin\left(A^{\frac{\alpha}{2}}(s-r)\right)\right)\mathrm{d}B_H(r)\right\|^2\right]\\
&\lesssim&I_1+I_2+I_3+I_4+I_5+I_6.
\end{eqnarray*}
For $\frac{1}{2}<H<1$, the inequality $\cos(a)-\cos(b)\lesssim |a-b|^\theta \ (0\le\theta\le1)$ implies
\begin{eqnarray*}
I_1&=&\mathrm{E}\left[\left\|\sum_i\left(\cos\left(\lambda_i^{\frac{\alpha}{2}}t\right)-\cos\left(\lambda_i^{\frac{\alpha}{2}}s\right)\right)\left\langle u_0,\phi_{i}(x)\right\rangle\phi_{i}(x)\right\|^2\right]\\
&\lesssim&\mathrm{E}\left[\sum_i\lambda_i^{\gamma}(t-s)^{2\cdot\min\{\frac{\gamma}{\alpha},1\}}\left\langle u_0,\phi_{i}(x)\right\rangle^2\right]\\
&\lesssim&(t-s)^{\min\{\frac{2\gamma}{\alpha},2\}}\mathrm{E}\left[\left\|A^{\frac{\gamma}{2}}u_0\right\|^2\right].
\end{eqnarray*}
Similar to the derivation of $I_1$, it holds that
\begin{eqnarray*}
I_2\lesssim (t-s)^{\min\{\frac{2\gamma}{\alpha},2\}}\mathrm{E}\left[\left\|A^{\frac{\gamma-\alpha}{2}}v_0\right\|^2\right].
\end{eqnarray*}
For the H\"older regularity of the third term, using Equation \eqref{eq:3.6} and Assumption \ref{as:2.1} leads to
\begin{eqnarray*}
I_3&\lesssim& (t-s)\int^t_s\mathrm{E}\left[\left\|A^{-\frac{\alpha}{2}}\sin\left(A^{\frac{\alpha}{2}}(t-r)\right)f\left(u(r)\right)\right\|^2\right]\mathrm{d}r\\
&\lesssim& (t-s)\int^t_s\mathrm{E}\left[\left\|u(r)\right\|^2+1\right]\mathrm{d}r\\
&\lesssim& (t-s)^2\left(\mathrm{E}\left[\left\|A^{\frac{\gamma}{2}}u_0\right\|^2\right]+\mathrm{E}\left[\left\|A^{\frac{\gamma-\alpha}{2}}v_0\right\|^2\right]+1\right).
\end{eqnarray*}
Take $\theta_1=\min\left\{\frac{\gamma}{2\alpha},\frac{1}{2}\right\}$. Combining the fact that $|\sin(t)|\lesssim|t|^{\theta_1}$ $(t\in\mathbb{R})$ and Lemma \ref{le:02} leads to
\begin{eqnarray*}
I_4&=& \mathrm{E}\left[\left\|\int^t_sA^{-\frac{\alpha}{2}}\sin\left(A^{\frac{\alpha}{2}}(t-r)\right)\mathrm{d}B_H(r)\right\|^2\right]\\
&\lesssim& \sum_i\lambda_i ^{-\alpha-2\rho}\int^t_s\int^t_s\left|\sin\left(\lambda_i^{\frac{\alpha}{2}}(t-r)\right)\right|\times\left|\sin\left(\lambda_i^{\frac{\alpha}{2}}(t-r_1)\right)\right|\times|r-r_1|^{2H-2}\mathrm{d}r\mathrm{d}r_1\\
&\lesssim& (t-s)^{2H+\min\left\{\frac{\gamma}{\alpha},1\right\}}\sum_i\lambda_i ^{\frac{\gamma}{2}-\alpha-2\rho}\\
&\lesssim& (t-s)^{2H+\min\left\{\frac{\gamma}{\alpha},1\right\}}.
\end{eqnarray*}
Using $\sin(a)-\sin(b)\lesssim |a-b|^{\theta_2}$ and the assumption of $f$ with $\theta_2=\min\left\{\frac{\gamma}{\alpha},1\right\}$, we have
\begin{eqnarray*}
I_5&\lesssim&(t-s)^{\min\left\{\frac{2\gamma}{\alpha},2\right\}}\mathrm{E}\left[\left\|\int^s_0A^{\frac{\gamma-\alpha}{2}}f\left(u(r)\right)\mathrm{d}r\right\|^2\right]\\
&\lesssim& (t-s)^{\min\left\{\frac{2\gamma}{\alpha},2\right\}}\left(\mathrm{E}\left[\left\|A^{\frac{\gamma}{2}}u_0\right\|^2\right]+\mathrm{E}\left[\left\|A^{\frac{\gamma-\alpha}{2}}v_0\right\|^2\right]+t^{2H}\right).
\end{eqnarray*}
For $\gamma>\alpha$, by using Lemma \ref{le:02} and the inequality $\sin\left(\lambda_i^{\frac{\alpha}{2}}(t-r)\right)-\sin\left(\lambda_i^{\frac{\alpha}{2}}(s-r)\right)\lesssim\lambda_i^{\frac{\alpha}{2}}(t-s)$, we get the bound of $I_6$, i.e.,
\begin{eqnarray*}
I_6&\lesssim&\sum_i\lambda_i ^{-\alpha-2\rho}\int^s_0\int^s_0\left|\sin\left(\lambda_i^{\frac{\alpha}{2}}(t-r)\right)-\sin\left(\lambda_i^{\frac{\alpha}{2}}(s-r)\right)\right|\\
&&\times\left|\sin\left(\lambda_i^{\frac{\alpha}{2}}(t-r_1)\right)-\sin\left(\lambda_i^{\frac{\alpha}{2}}(s-r_1)\right)\right|\times|r-r_1|^{2H-2}\mathrm{d}r\mathrm{d}r_1\\
&\lesssim& t^{2H}(t-s)^2\sum_i\lambda_i ^{\alpha-\gamma+\gamma-\alpha-2\rho}\\
&\lesssim& t^{2H}(t-s)^2.
\end{eqnarray*}
When $\gamma\le\alpha$, using $\sin\left(\lambda_i^{\frac{\alpha}{2}}(t-r)\right)-\sin\left(\lambda_i^{\frac{\alpha}{2}}(s-r)\right)\lesssim\lambda_i^{\frac{\gamma}{2}}(t-s)^{\frac{\gamma}{\alpha}}$ leads to
\begin{eqnarray*}
I_6&\lesssim& t^{2H}(t-s)^{\frac{2\gamma}{\alpha}}\sum_i\lambda_i ^{\gamma-\alpha-2\rho}\\
&\lesssim& \frac{t^{2H}}{\varepsilon}(t-s)^{\frac{2\gamma}{\alpha}}.
\end{eqnarray*}
Then collecting the above estimates arrives at
\begin{equation*}
\mathrm{E}\left[\left\|u(t)-u(s)\right\|^2\right]\lesssim (t-s)^2\left(t^{2H}+\mathrm{E}\left[\left\|A^{\frac{\gamma}{2}}u_0\right\|^2\right]+\mathrm{E}\left[\left\|A^{\frac{\gamma-\alpha}{2}}v_0\right\|^2\right]\right),\ \gamma>\alpha
\end{equation*}
and
\begin{equation*}
\mathrm{E}\left[\left\|u(t)-u(s)\right\|^2\right]\lesssim (t-s)^{\frac{2\gamma}{\alpha}}\left(\frac{t^{2H}}{\varepsilon}+\mathrm{E}\left[\left\|A^{\frac{\gamma}{2}}u_0\right\|^2\right]+\mathrm{E}\left[\left\|A^{\frac{\gamma-\alpha}{2}}v_0\right\|^2\right]\right),\ \gamma\le\alpha.
\end{equation*}
When $H=\frac{1}{2}$, the above H\"older regularity results still hold. The proof is completed.
\end{proof}

In fact, one can get an equivalent form of \eqref{eq:3.2} by using variable substitution, %And the regularity of the solution for the equivalent form is better than one of \eqref{eq:3.2}.
the regularity of whose solution is better than the one of the solution of \eqref{eq:3.2}.
Let
\begin{equation}\label{eq:3.6-1}
Z(t)=X(t)-\int^t_0\mathrm{e}^{\Lambda(t-s)}\left[\begin{array}{c}
0\\
I
\end{array}\right]\mathrm{d}B_H(s),
\end{equation}
where
\begin{equation*}
Z(t)=
\left[\begin{array}{c}
z(t)\\[1.5mm]
\dot{z}(t)
\end{array} \right].
\end{equation*}
If $X(t)$ is the unique mild solution of \eqref{eq:3.2}, then $Z(t)$ is the unique mild solution of the  partial differential equation
\begin{equation}\label{eq:3.7}
\frac{\mathrm{d}}{\mathrm{d}t}Z(t)=\Lambda Z(t)+\left[\begin{array}{c}
0\\
f(u(t))
\end{array} \right] ~ {\rm for } ~ t\in(0,T] ~ {\rm with } ~ Z(0)=X(0),
\end{equation}
where
\begin{equation*}
f\left(u(t)\right)=f\left(z(t)+\int^{t}_0A^{-\frac{\alpha}{2}}\sin\left(A^{\frac{\alpha}{2}}(t-s)\right)\mathrm{d}B_H(s)\right).
\end{equation*}
The unique mild solution of \eqref{eq:3.7} is given by
\begin{equation}\label{eq:3.7-1}
Z(t)=\mathrm{e}^{\Lambda t}Z(0)+\int^t_0\mathrm{e}^{\Lambda (t-s)}\left[\begin{array}{c}
0\\
f(u(t))
\end{array} \right]\mathrm{d}s.
\end{equation}
Then we can obtain $z(t)$ and $\dot{z}(t)$ as
\begin{equation}\label{eq:3.7-2}
\begin{split}
z(t)&=\cos\left(A^{\frac{\alpha}{2}}t\right)u_0+A^{-\frac{\alpha}{2}}\sin\left(A^{\frac{\alpha}{2}}t\right)v_0+\int^t_0A^{-\frac{\alpha}{2}}\sin\left(A^{\frac{\alpha}{2}}(t-s)\right)f\left(u(s)\right)\mathrm{d}s,\\
\dot{z}(t)&=-A^{\frac{\alpha}{2}}\sin\left(A^{\frac{\alpha}{2}}t\right)u_0+\cos\left(A^{\frac{\alpha}{2}}t\right)v_0+\int^t_0\cos\left(A^{\frac{\alpha}{2}}(t-s)\right)f\left(u(s)\right)\mathrm{d}s.
\end{split}
\end{equation}

Equation \eqref{eq:3.7} will be used to obtain the spatial semi-discretization solution of \eqref{eq:3.2}. Therefore we give the following estimates, which will be used to discuss the spatial error.
\begin{corollary}\label{co:2}
Suppose that Assumptions \ref{as:2.1}-\ref{as:2.2} are satisfied, $\left\|u_0\right\|_{L^p(D,\dot{U}^{\gamma+\alpha})}<\infty$, $\left\|v_0\right\|_{L^p(D,\dot{U}^{\gamma})}<\infty$, $\varepsilon>0$, $\gamma=\alpha+2\rho-\frac{d+\varepsilon}{2}$, and $\gamma>0$.
Then there exists a unique mild solution $Z(t)$ for \eqref{eq:3.7} and
\begin{equation*}
\left\|z(t)\right\|_{L^p(D,\dot{U}^{\gamma+\alpha})}+\left\|\dot{z}(t)\right\|_{L^p(D,\dot{U}^{\gamma})}\lesssim \frac{t^H}{\varepsilon}+\left\|u_0\right\|_{L^p(D,\dot{U}^{\gamma+\alpha})}+\left\|v_0\right\|_{L^p(D,\dot{U}^{\gamma})}.
\end{equation*}
\end{corollary}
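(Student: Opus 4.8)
The plan is to bound the three deterministic terms in the representations \eqref{eq:3.7-2} of $z(t)$ and $\dot{z}(t)$ directly: since (in contrast to the full mild solution \eqref{eq:3.4}) no stochastic convolution appears in $Z(t)$, the smoothing operator $A^{-\frac{\alpha}{2}}\sin(A^{\frac{\alpha}{2}}\cdot)$ sitting in front of the nonlinearity buys $\alpha$ extra derivatives over Theorem \ref{th:1}. Existence and uniqueness of $Z(t)$ is immediate from \eqref{eq:3.6-1}: the map $X\mapsto Z$ merely subtracts the stochastic convolution $\int_0^t A^{-\frac{\alpha}{2}}\sin(A^{\frac{\alpha}{2}}(t-s))\,\mathrm{d}B_H(s)$, a well-defined $\dot{U}^{\gamma}$-valued process by \eqref{eq:3.5}--\eqref{eq:3.5-1}, so the unique $X(t)$ of Theorem \ref{th:1} produces the unique mild solution $Z(t)$ of \eqref{eq:3.7}.

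For the estimate I would apply $A^{\frac{\gamma+\alpha}{2}}$ to $z(t)$ and $A^{\frac{\gamma}{2}}$ to $\dot{z}(t)$ in \eqref{eq:3.7-2} and take $L^p(D,U)$ norms. Using $\|\cos(A^{\frac{\alpha}{2}}t)\|\le1$ and $\|\sin(A^{\frac{\alpha}{2}}t)\|\le1$ in operator norm together with $A^{\frac{\gamma+\alpha}{2}}A^{-\frac{\alpha}{2}}=A^{\frac{\gamma}{2}}$ and $A^{\frac{\gamma}{2}}A^{\frac{\alpha}{2}}=A^{\frac{\gamma+\alpha}{2}}$, the two initial-data terms are controlled by $\|A^{\frac{\gamma+\alpha}{2}}u_0\|_{L^p(D,U)}+\|A^{\frac{\gamma}{2}}v_0\|_{L^p(D,U)}$ for both $z$ and $\dot{z}$. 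For the convolution term, the triangle inequality and the same operator bounds give $\int_0^t\|A^{\frac{\gamma}{2}}f(u(s))\|_{L^p(D,U)}\,\mathrm{d}s$; Assumption \ref{as:2.1} then yields $\|A^{\frac{\gamma}{2}}f(u(s))\|_{L^p(D,U)}\lesssim 1+\|A^{\frac{\gamma}{2}}u(s)\|_{L^p(D,U)}$, and Theorem \ref{th:1} --- whose hypotheses follow from those of the corollary because $\inf_i\lambda_i>0$ (Lemma \ref{le:01}) gives the embeddings $\dot{U}^{\gamma+\alpha}\hookrightarrow\dot{U}^{\gamma}\hookrightarrow\dot{U}^{\gamma-\alpha}$ --- bounds $\|A^{\frac{\gamma}{2}}u(s)\|_{L^p(D,U)}$ by $s^H/\varepsilon+\|u_0\|_{L^p(D,\dot{U}^{\gamma})}+\|v_0\|_{L^p(D,\dot{U}^{\gamma-\alpha})}$. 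Integrating over $s\in[0,t]\subset[0,T]$, absorbing the factors $t$ and $t^{H+1}\le Tt^{H}$ into the implicit constant, and using the same embeddings to pass from $\|u_0\|_{\dot{U}^{\gamma}},\|v_0\|_{\dot{U}^{\gamma-\alpha}}$ to $\|u_0\|_{\dot{U}^{\gamma+\alpha}},\|v_0\|_{\dot{U}^{\gamma}}$, gives the claimed bound.

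I do not expect a genuine obstacle: the argument is that of Theorem \ref{th:1} with all smoothness exponents shifted up by $\alpha$, and no Gr\"onwall loop is needed since the bound on $u(s)$ is already available from that theorem. The only delicate points are the exponent bookkeeping --- verifying that the $\alpha$-smoothing of $A^{-\frac{\alpha}{2}}\sin(A^{\frac{\alpha}{2}}\cdot)$ exactly offsets the increase from $(\dot{U}^{\gamma},\dot{U}^{\gamma-\alpha})$ to $(\dot{U}^{\gamma+\alpha},\dot{U}^{\gamma})$ --- and observing that the summability restriction $\frac{2(\gamma-\alpha-2\rho)}{d}<-1$ (equivalently the choice $\gamma=\alpha+2\rho-\frac{d+\varepsilon}{2}$) is used here only implicitly, through the invocation of Theorem \ref{th:1}.
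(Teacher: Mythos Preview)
Your proposal is correct and follows essentially the same approach as the paper, which merely states that the proof is ``very similar to the proof of Theorem~\ref{th:1}''. Your one minor variation---invoking Theorem~\ref{th:1} directly to bound $\|A^{\gamma/2}u(s)\|_{L^p(D,U)}$ instead of re-running a Gr\"onwall loop on $z$---is a clean shortcut within the same strategy and works for exactly the reason you identify.
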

The proof of this corollary is very similar to the proof of Theorem \ref{th:1}.

\section{Galerkin approximation for spatial discretization} \label{sec:4}

The convergence rate of the spectral approximation depends on the regularity of the mild solution in space. Theorem \ref{th:1} and Corollary \ref{co:2} show that $z(t)$ is more regular than $u(t)$ in space; so we obtain the spatial approximation of \eqref{eq:3.2} by using the spectral Galerkin method to discretize \eqref{eq:3.7} and postprocessing the stochastic integral.

A finite dimensional subspace of $U$ will be needed to implement the Galerkin spatial approximation of \eqref{eq:3.7}. Denoting the $N$ dimensional subspace of $U$ by $U^N$, the sequence $\left\{\phi_{1}(x), \dots, \phi_{i}(x), \dots, \phi_{N}(x)\right\}_{N\in\mathbb{N}}$ is an orthonormal basis of $U^N$. Then we introduce the projection operator $P_N:\,U\to U^N$, for $\xi\in U$,
\begin{equation*}
P_N\xi=\sum^N_{i=1}\left\langle\xi,\phi_{i}(x) \right\rangle\phi_{i}(x)
\end{equation*}
and
\begin{equation}\label{eq:4.1}
\left\langle P_N\xi,\chi\right\rangle=\left\langle \xi,\chi\right\rangle \quad \forall \chi\in U^N.
\end{equation}
To obtain the Galerkin formulation of \eqref{eq:3.7}, we look for $z^N(t)\in U^N$ and $\dot{z}^N\in U^N$ such that
\begin{eqnarray}\label{eq:4.3}
\left[\begin{array}{l}
\left\langle\mathrm{d}z^N(t),\chi\right\rangle\\[1.5mm]
\left\langle\mathrm{d}\dot{z}^N(t),\chi\right\rangle
\end{array} \right]&=&\Lambda\left[\begin{array}{cc}
\left\langle z^N(t),\chi\right\rangle\\[1.5mm]
\left\langle \dot{z}^N(t),\chi\right\rangle
\end{array} \right]\mathrm{d}t+\left[\begin{array}{c}
0\\[1.5mm]
\left\langle f\left(u^N(t)\right),\chi\right\rangle
\end{array} \right]\mathrm{d}t
\end{eqnarray}
and
\begin{equation*}
\left\langle z^N_0,\chi\right\rangle =\left\langle z_0,\chi\right\rangle,\ \left\langle \dot{z}^N_0,\chi\right\rangle =\left\langle \dot{z}_0,\chi\right\rangle,
\end{equation*}
where
\begin{equation*}
f\left(u^N(t)\right)=f\left(z^N(t)+P_N\int^{t}_0A^{-\frac{\alpha}{2}}\sin\left(A^{\frac{\alpha}{2}}(t-s)\right)\mathrm{d}B_H(s)\right).
\end{equation*}
The Galerkin formulation of \eqref{eq:3.7} is obtained by using \eqref{eq:4.1} and \eqref{eq:4.3}, that is
\begin{eqnarray}\label{eq:4.4}
\left[\begin{array}{l}
\mathrm{d}z^N(t)\\[1.5mm]
\mathrm{d}\dot{z}^N(t)
\end{array} \right]=\Lambda\left[\begin{array}{cc}
 z^N(t)\\[1.5mm]
 \dot{z}^N(t)
\end{array} \right]\mathrm{d}t+\left[\begin{array}{c}
0\\[1.5mm]
 f_N\left(u^N(t)\right)
\end{array} \right]\mathrm{d}t
\end{eqnarray}
and
\begin{equation*}
z^N_0=P_Nu_0,\ \dot{z}^N_0=P_Nv_0,
\end{equation*}
where $f_N=P_Nf$. We can obtain the mild solution of \eqref{eq:4.4} as
\begin{equation}\label{eq:4.4-11}
\begin{split}
z^N(t)=&\cos\left(A^{\frac{\alpha}{2}}t\right)u^N_0+A^{-\frac{\alpha}{2}}\sin\left(A^{\frac{\alpha}{2}}t\right)v^N_0+\int^t_0A^{-\frac{\alpha}{2}}\sin\left(A^{\frac{\alpha}{2}}(t-s)\right)f_N\left(u^N(s)\right)\mathrm{d}s,\\
\dot{z}^N(t)=&-A^{\frac{\alpha}{2}}\sin\left(A^{\frac{\alpha}{2}}t\right)u^N_0+\cos\left(A^{\frac{\alpha}{2}}t\right)v^N_0+\int^t_0\cos\left(A^{\frac{\alpha}{2}}(t-s)\right)f_N\left(u^N(s)\right)\mathrm{d}s.
\end{split}
\end{equation}
Then the spatial semi-discretization solution of \eqref{eq:3.2} is given by
\begin{eqnarray}\label{eq:4.4-1}
u^N(t)=z^N(t)+P_{N}\int^{t}_0A^{-\frac{\alpha}{2}}\sin\left(A^{\frac{\alpha}{2}}(t-s)\right)\mathrm{d}B_H(s)
\end{eqnarray}
and
\begin{eqnarray}\label{eq:4.4-2}
v^N(t)&=\dot{z}^N(t)+P_{N}\int^{t}_0\cos\left(A^{\frac{\alpha}{2}}(t-s)\right)\mathrm{d}B_H(s).
\end{eqnarray}

In fact, Corollary \ref{co:2} shows $z(t)$ has better regularity
than the stochastic integral $\int^{t}_0A^{-\frac{\alpha}{2}}\sin\left(A^{\frac{\alpha}{2}}(t-s)\right)\mathrm{d}B_H(s)$ in space; so we can improve accuracy of the Galerkin approximate solution by postprocessing the stochastic integral of \eqref{eq:4.4-1}.
Let $N_1=\left[N^{\theta_3}\right]$ and $\theta_3\ge1$, with $[y]$ being the nearest integer to $y$. Then the spatial semi-discretization solution of \eqref{eq:3.2} can be expressed as
\begin{eqnarray}\label{eq:4.5}
u^N(t)&=&z^N(t)+P_{N_1}\int^{t}_0A^{-\frac{\alpha}{2}}\sin\left(A^{\frac{\alpha}{2}}(t-s)\right)\mathrm{d}B_H(s)
\end{eqnarray}
and
\begin{eqnarray}\label{eq:4.6}
v^N(t)=\dot{z}^N(t)+P_{N_1}\int^{t}_0\cos\left(A^{\frac{\alpha}{2}}(t-s)\right)\mathrm{d}B_H(s).
\end{eqnarray}
Using \eqref{eq:4.4} leads to
\begin{eqnarray}\label{eq:4.6-1}
\dot{z}^N(t)&=&-A^{\frac{\alpha}{2}}\sin\left(A^{\frac{\alpha}{2}}(t-s)\right)z^N(s)+\cos\left(A^{\frac{\alpha}{2}}(t-s)\right)\dot{z}^N(s)\\
&~&+\int^t_s\cos\left(A^{\frac{\alpha}{2}}(t-r)\right)f_N\left(u^N(r)\right)\mathrm{d}r.\nonumber
\end{eqnarray}
Then substituting \eqref{eq:4.5} and \eqref{eq:4.6} into \eqref{eq:4.6-1} leads to
\begin{eqnarray}\label{eq:4.7-1}
v^N(t)
&=&-A^{\frac{\alpha}{2}}\sin\left(A^{\frac{\alpha}{2}}(t-s)\right)u^N(s)+\cos\left(A^{\frac{\alpha}{2}}(t-s)\right)v^N(s)\\
&&+P_{N_1}\int^{t}_s\cos\left(A^{\frac{\alpha}{2}}(t-r)\right)\mathrm{d}B_H(r)\nonumber\\
&&+\int^{t}_{s}\cos\left(A^{\frac{\alpha}{2}}(t-r)\right)f_N\left(u^N(r)\right)\mathrm{d}r,\nonumber
\end{eqnarray}
which will be used to prove Theorem \ref{th:7}.

Combining Theorem \ref{th:1}, \eqref{eq:4.5} and \eqref{eq:4.6}, we now deduce the following regularity results of the spatial semi-discretization solution.
\begin{corollary}\label{co:3}
Suppose that Assumption \ref{as:2.1}-\ref{as:2.2} are satisfied; $\left\|u_0\right\|_{L^p(D,\dot{U}^{\gamma})}<\infty$, $\left\|v_0\right\|_{L^p(D,\dot{U}^{\gamma-\alpha})}<\infty$, $\varepsilon>0$, $\gamma=\alpha+2\rho-\frac{d+\varepsilon}{2}$ and $\gamma>0$.
The approximate solution $u^N(t)$ and $v^N(t)$ is expressed by \eqref{eq:4.5} and \eqref{eq:4.6}, respectively. Then
\begin{equation*}
\left\|u^N(t)\right\|_{L^p(D,\dot{U}^{\gamma})}+\left\|v^N(t)\right\|_{L^p(D,\dot{U}^{\gamma-\alpha})}\lesssim \frac{t^H}{\varepsilon}+\left\|u_0\right\|_{L^p(D,\dot{U}^{\gamma})}+\left\|v_0\right\|_{L^p(D,\dot{U}^{\gamma-\alpha})}
\end{equation*}
and

$\mathrm{(i)}$ for $\gamma\le\alpha$,
\begin{equation*}
\left\|u^N(t)-u^N(s)\right\|_{L^2(D,U)}\lesssim (t-s)^{\frac{\gamma}{\alpha}}\left(\frac{t^H}{\varepsilon}+\left\|u_0\right\|_{L^2(D,\dot{U}^\gamma)}+\left\|v_0\right\|_{L^2(D,\dot{U}^{\gamma-\alpha})}\right).
\end{equation*}

$\mathrm{(ii)}$ for $\gamma>\alpha$,
\begin{equation*}
\left\|u^N(t)-u^N(s)\right\|_{L^2(D,U)}\lesssim (t-s)\left(t^H+\left\|u_0\right\|_{L^2(D,\dot{U}^\gamma)}+\left\|v_0\right\|_{L^2(D,\dot{U}^{\gamma-\alpha})}\right).
\end{equation*}

\end{corollary}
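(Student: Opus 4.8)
The plan is to repeat the proof of Theorem~\ref{th:1} almost word for word, the only new ingredient being the single structural remark that $P_N$ and $P_{N_1}$ are orthogonal projections onto spans of eigenfunctions of $A$; hence they commute with every power $A^{\nu/2}$ and act as contractions on each space $\dot U^\nu$, $\nu\ge0$. In particular $f_N=P_Nf$ still satisfies the bounds of Assumption~\ref{as:2.1} in every $\dot U^\nu$-norm, because $\|A^{\nu/2}P_Nf(v)\|=\|P_NA^{\nu/2}f(v)\|\le\|A^{\nu/2}f(v)\|\lesssim 1+\|A^{\nu/2}v\|$; and truncating a sum of nonnegative terms only decreases it, so the estimates \eqref{eq:3.5}--\eqref{eq:3.5-1}, with the series cut at $N_1$, hold with the same constants, uniformly in $N_1$. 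Well-posedness of \eqref{eq:4.4} is immediate, since it is a finite-dimensional linear system with globally Lipschitz (random) forcing.

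\textbf{A priori bound.} Write $u^N(t)=z^N(t)+S^N(t)$ with $S^N(t)=P_{N_1}\int_0^tA^{-\alpha/2}\sin(A^{\alpha/2}(t-s))\,\mathrm{d}B_H(s)$ and substitute this into the mild formula \eqref{eq:4.4-11}, so that $z^N$ enters the right-hand side only through $f_N(u^N(s))=f_N(z^N(s)+S^N(s))$. Using the triangle inequality, moving $A^{\gamma/2}$ onto each summand, the bound $\|A^{-\alpha/2}\sin(A^{\alpha/2}\tau)\|\lesssim1$ on $[0,T]$, the $\dot U^\gamma$-growth bound for $f_N$, and $\|A^{\gamma/2}S^N(s)\|_{L^p(D,U)}\lesssim t^H/\varepsilon$ from \eqref{eq:3.5}--\eqref{eq:3.5-1} (the hypotheses force $2(\gamma-\alpha-2\rho)/d=-1-\varepsilon/d<-1$, so the relevant series converges), one obtains
\begin{equation*}
\|A^{\gamma/2}z^N(t)\|_{L^p(D,U)}\lesssim\|A^{\gamma/2}u_0\|_{L^p(D,U)}+\|A^{(\gamma-\alpha)/2}v_0\|_{L^p(D,U)}+\frac{t^H}{\varepsilon}+\int_0^t\|A^{\gamma/2}z^N(s)\|_{L^p(D,U)}\,\mathrm{d}s.
\end{equation*}
Gr\"onwall's inequality yields the bound for $z^N$ in $L^p(D,\dot U^\gamma)$, and the same computation gives $\dot z^N$ in $L^p(D,\dot U^{\gamma-\alpha})$; adding the stochastic-integral bounds from \eqref{eq:4.5}--\eqref{eq:4.6} then gives the claimed estimate for $\|u^N(t)\|_{L^p(D,\dot U^\gamma)}+\|v^N(t)\|_{L^p(D,\dot U^{\gamma-\alpha})}$.

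\textbf{H\"older continuity.} Split $u^N(t)-u^N(s)$ into $z^N(t)-z^N(s)$ and $P_{N_1}$ applied to the difference of the two stochastic integrals $\int_0^tA^{-\alpha/2}\sin(A^{\alpha/2}(t-r))\,\mathrm{d}B_H(r)$ and $\int_0^sA^{-\alpha/2}\sin(A^{\alpha/2}(s-r))\,\mathrm{d}B_H(r)$. The first part, expanded via \eqref{eq:4.4-11}, is estimated exactly as the terms $I_1,I_2,I_3,I_5$ in the proof of Theorem~\ref{th:1}, using $|\cos a-\cos b|\lesssim|a-b|^\theta$ and $|\sin a-\sin b|\lesssim|a-b|^\theta$ for $\theta\in[0,1]$, the a priori bound just established, and Assumption~\ref{as:2.1}; it contributes orders $(t-s)^{\min\{2\gamma/\alpha,2\}}$ and $(t-s)^2$. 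The second part is estimated as $I_4$ and $I_6$: split it into the near-diagonal piece $\int_s^t$ and the difference piece $\int_0^s$, and compute the second moments with the fBm covariance Lemma~\ref{le:02} when $H\in(\frac{1}{2},1)$ and by the corresponding Brownian computation (independent increments) when $H=\frac{1}{2}$; since $P_{N_1}$ only discards terms from the resulting sums of squares, the bounds $(t-s)^{2H+\min\{\gamma/\alpha,1\}}$ and ($(t-s)^2$ for $\gamma>\alpha$, $(t-s)^{2\gamma/\alpha}$ for $\gamma\le\alpha$) carry over unchanged. As $H\ge\frac{1}{2}$, the smallest exponent is $\min\{2\gamma/\alpha,2\}$ — equal to $2\gamma/\alpha$ when $\gamma\le\alpha$ and to $2$ when $\gamma>\alpha$ — so taking square roots gives (i) and (ii).

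\textbf{Main obstacle.} There is no essentially new difficulty: the argument is the proof of Theorem~\ref{th:1} with $P_N,P_{N_1}$ inserted, and all constants remain independent of $N$ and $N_1=[N^{\theta_3}]$. The points that need a little care are (a) the growth bound for $f_N=P_Nf$ in each $\dot U^\nu$-norm, verified above; (b) feeding the already-controlled term $\|A^{\gamma/2}S^N\|$ through the Gr\"onwall step for $z^N$, just as the $t^H/\varepsilon$ term is carried along in Theorem~\ref{th:1}; and (c) keeping the two truncation levels $N$ and $N_1$ distinct, which is harmless since every bound is uniform in both.
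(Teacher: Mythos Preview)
Your proposal is correct and follows exactly the approach the paper indicates: the paper simply states that ``the proof of this corollary is done in the same way as Theorem~\ref{th:1}'' without further detail, and your argument supplies precisely the routine adaptations (commutation of $P_N,P_{N_1}$ with powers of $A$, contractivity of the projections, uniformity of constants in $N,N_1$) needed to carry the Theorem~\ref{th:1} proof over to the spatially discretized quantities.
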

The proof of this corollary is done in the same way as Theorem \ref{th:1}.

Next, the following lemma are given to analyze the error of the approximate solution $u^N(t)$ in \eqref{eq:4.5}.
\begin{lemma}\label{le:4}
If $\mathrm{E}\left[\|A^{\frac{\nu}{2}}\xi\|^2\right]<\infty$, $\xi\in U$, then
\begin{equation*}
\mathrm{E}\left[\|(P_N-I)\xi\|^2\right]\lesssim \lambda_{N+1}^{-\nu}\mathrm{E}\left[\|A^{\frac{\nu}{2}}\xi\|^2\right].
\end{equation*}
\end{lemma}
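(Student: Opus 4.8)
The plan is to reduce everything to Parseval's identity in the orthonormal eigenbasis $\{\phi_i(x)\}_{i\in\mathbb{N}}$ of $U$. First I would fix a realization of $\xi$ and write $\xi=\sum_{i=1}^\infty\langle\xi,\phi_i(x)\rangle\phi_i(x)$, so that, by the definition of $P_N$, the error $(P_N-I)\xi=-\sum_{i=N+1}^\infty\langle\xi,\phi_i(x)\rangle\phi_i(x)$ is exactly the tail of this expansion. Orthonormality of the $\phi_i$ then gives the pathwise identities $\|(P_N-I)\xi\|^2=\sum_{i=N+1}^\infty\langle\xi,\phi_i(x)\rangle^2$ and $\|A^{\frac{\nu}{2}}\xi\|^2=\sum_{i=1}^\infty\lambda_i^{\nu}\langle\xi,\phi_i(x)\rangle^2\ge\sum_{i=N+1}^\infty\lambda_i^{\nu}\langle\xi,\phi_i(x)\rangle^2$.

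Next I would exploit that the Dirichlet eigenvalues are arranged in nondecreasing order (consistent with the two-sided bound in Lemma \ref{le:01}), so that $\lambda_i\ge\lambda_{N+1}>0$ for every $i\ge N+1$. Since $\nu\ge0$, this gives $1\le\lambda_{N+1}^{-\nu}\lambda_i^{\nu}$, and hence, termwise, $\langle\xi,\phi_i(x)\rangle^2\le\lambda_{N+1}^{-\nu}\lambda_i^{\nu}\langle\xi,\phi_i(x)\rangle^2$ for all $i\ge N+1$. Summing over $i\ge N+1$ and combining with the two displays above yields the pathwise bound $\|(P_N-I)\xi\|^2\le\lambda_{N+1}^{-\nu}\|A^{\frac{\nu}{2}}\xi\|^2$.

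Finally I would take expectations on both sides; since $\mathrm{E}[\|A^{\frac{\nu}{2}}\xi\|^2]<\infty$ by hypothesis, the right-hand side is finite and the claimed estimate $\mathrm{E}[\|(P_N-I)\xi\|^2]\lesssim\lambda_{N+1}^{-\nu}\mathrm{E}[\|A^{\frac{\nu}{2}}\xi\|^2]$ follows (in fact with implied constant equal to $1$). There is essentially no real obstacle in this argument: the only point worth spelling out is the monotone ordering of the eigenvalues, which guarantees $\lambda_i\ge\lambda_{N+1}$ for $i\ge N+1$, used together with $\nu\ge0$; everything else is Parseval's identity and a termwise comparison.
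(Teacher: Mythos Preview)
Your argument is correct and is the standard one: expand in the eigenbasis, use Parseval, then use the monotone ordering $\lambda_i\ge\lambda_{N+1}$ for $i\ge N+1$ together with $\nu\ge0$ to compare termwise, and take expectations. The paper itself states Lemma~\ref{le:4} without proof (treating it as a routine projection-error estimate), so there is no alternative approach to compare against; your write-up supplies exactly the expected justification, and your remark that the implied constant is in fact $1$ is accurate.
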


Corollary \ref{co:2} shows $z(t)\in L^p(D,\dot{U}^{\gamma+\alpha})$.
From Lemma \ref{le:4}, we can infer that $\mathrm{E}\left[\|z(t)-P_Nz(t)\|^2\right]\lesssim \lambda_{N+1}^{-\gamma-\alpha}\mathrm{E}\left[\|A^{\frac{\gamma+\alpha}{2}}z(t)\|^2\right]$. Therefore we can obtain an order of $\frac{\gamma+\alpha}{d}$ for the spatial semi-discretization solution by adjusting $N_1$ in \eqref{eq:4.5}. Let $N_1=\left[N^{\frac{\gamma+\alpha}{\gamma}}\right]$. Then we get following result.
\begin{theorem} \label{th:5}
Let $X(t)$ and $Z^N(t)$ be the mild solution of \eqref{eq:3.2} and \eqref{eq:3.7}, respectively. Suppose that Assumption \ref{as:2.1}-\ref{as:2.2} are satisfied. Let $\left\|u_0\right\|_{L^p(D,\dot{U}^{\gamma+\alpha})}<\infty$, $\left\|v_0\right\|_{L^p(D,\dot{U}^{\gamma})}<\infty$, $\varepsilon>0$, $\gamma=\alpha+2\rho-\frac{d+\varepsilon}{2}$, $\gamma>0$; and \eqref{eq:4.5} is the approximation of $u(t)$. If $N_1=\left[N^{\frac{\gamma+\alpha}{\gamma}}\right]$, then we have
\begin{equation*}
\left\|u(t)-u^N(t)\right\|_{L^2(D,U)}\lesssim N^{-\frac{\gamma+\alpha}{d}}\left(\frac{t^H}{\varepsilon}+\left\|u_0\right\|_{L^2(D,\dot{U}^{\gamma+\alpha})}+\left\|v_0\right\|_{L^2(D,\dot{U}^{\gamma})}\right).
\end{equation*}
\end{theorem}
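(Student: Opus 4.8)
The plan is to split the error by means of the ``deterministic'' representations \eqref{eq:3.7-2} and \eqref{eq:4.4-11} together with the postprocessed semidiscrete solution \eqref{eq:4.5}. Set $W(t)=\int_0^t A^{-\frac{\alpha}{2}}\sin\left(A^{\frac{\alpha}{2}}(t-s)\right)\mathrm{d}B_H(s)$, so that $u(t)=z(t)+W(t)$ and $u^N(t)=z^N(t)+P_{N_1}W(t)$; hence
\begin{equation*}
u(t)-u^N(t)=\left(z(t)-P_N z(t)\right)+\left(P_N z(t)-z^N(t)\right)+\left(I-P_{N_1}\right)W(t).
\end{equation*}
I would bound the three terms in $L^2(D,U)$ separately and then close the estimate with Gr\"onwall's inequality.

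Consider first the two ``clean'' terms. The projector $P_N$ shares the eigenbasis $\{\phi_i\}$ with $A$, hence commutes with $\cos(A^{\frac{\alpha}{2}}t)$, $\sin(A^{\frac{\alpha}{2}}t)$ and every fractional power of $A$. By Corollary \ref{co:2}, $z(t)\in L^2(D,\dot{U}^{\gamma+\alpha})$ with norm $\lesssim\frac{t^H}{\varepsilon}+\|u_0\|_{L^2(D,\dot{U}^{\gamma+\alpha})}+\|v_0\|_{L^2(D,\dot{U}^{\gamma})}$, so Lemma \ref{le:4} with $\nu=\gamma+\alpha$ followed by Lemma \ref{le:01} gives
\begin{equation*}
\|z(t)-P_N z(t)\|_{L^2(D,U)}\lesssim\lambda_{N+1}^{-\frac{\gamma+\alpha}{2}}\|z(t)\|_{L^2(D,\dot{U}^{\gamma+\alpha})}\lesssim N^{-\frac{\gamma+\alpha}{d}}\left(\frac{t^H}{\varepsilon}+\|u_0\|_{L^2(D,\dot{U}^{\gamma+\alpha})}+\|v_0\|_{L^2(D,\dot{U}^{\gamma})}\right).
\end{equation*}
For $\left(I-P_{N_1}\right)W(t)$, the computation \eqref{eq:3.5-1} with its free parameter taken to be $\gamma$ (for which $\frac{2(\gamma-\alpha-2\rho)}{d}=-1-\frac{\varepsilon}{d}<-1$, so the series converges) shows $W(t)\in L^2(D,\dot{U}^{\gamma})$ with $\|W(t)\|_{L^2(D,\dot{U}^{\gamma})}\lesssim\frac{t^H}{\varepsilon}$; then Lemma \ref{le:4} with $\nu=\gamma$ and Lemma \ref{le:01} give $\|\left(I-P_{N_1}\right)W(t)\|_{L^2(D,U)}\lesssim\lambda_{N_1+1}^{-\frac{\gamma}{2}}\frac{t^H}{\varepsilon}\lesssim N_1^{-\frac{\gamma}{d}}\frac{t^H}{\varepsilon}$. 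The choice $N_1=\left[N^{\frac{\gamma+\alpha}{\gamma}}\right]$ is exactly what turns $N_1^{-\frac{\gamma}{d}}$ into $N^{-\frac{\gamma+\alpha}{d}}$, matching the first term.

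For the Galerkin term, I would subtract the mild-solution formula \eqref{eq:4.4-11} for $z^N$ from $P_N$ applied to \eqref{eq:3.7-2}; using the commutation of $P_N$, the identities $z^N_0=P_Nu_0$, $\dot{z}^N_0=P_Nv_0$ and $f_N=P_Nf$, the initial-data and deterministic contributions cancel, leaving
\begin{equation*}
P_N z(t)-z^N(t)=\int_0^t A^{-\frac{\alpha}{2}}\sin\left(A^{\frac{\alpha}{2}}(t-s)\right)P_N\left(f(u(s))-f(u^N(s))\right)\mathrm{d}s.
\end{equation*}
Since $A^{-\frac{\alpha}{2}}\sin(A^{\frac{\alpha}{2}}\tau)$ and $P_N$ are bounded on $U$ (note $\lambda_1>0$ as $D$ is bounded), Minkowski's integral inequality and the Lipschitz bound of Assumption \ref{as:2.1} give $\|P_N z(t)-z^N(t)\|_{L^2(D,U)}\lesssim\int_0^t\|u(s)-u^N(s)\|_{L^2(D,U)}\mathrm{d}s$. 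Adding the three estimates yields
\begin{equation*}
\|u(t)-u^N(t)\|_{L^2(D,U)}\lesssim N^{-\frac{\gamma+\alpha}{d}}\left(\frac{t^H}{\varepsilon}+\|u_0\|_{L^2(D,\dot{U}^{\gamma+\alpha})}+\|v_0\|_{L^2(D,\dot{U}^{\gamma})}\right)+\int_0^t\|u(s)-u^N(s)\|_{L^2(D,U)}\mathrm{d}s,
\end{equation*}
and since the first term is nondecreasing in $t$, Gr\"onwall's inequality on $[0,T]$ delivers the claimed bound.

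I do not anticipate a genuine obstacle: the proof is the standard error splitting plus Gr\"onwall. The only delicate point is confirming that the postprocessing level $N_1=\left[N^{\frac{\gamma+\alpha}{\gamma}}\right]$ exactly balances the two spatial contributions --- the noise term $W(t)$ lies in $\dot{U}^{\gamma}$ and no better (this is precisely what the relation $\gamma=\alpha+2\rho-\frac{d+\varepsilon}{2}$ encodes), while $z(t)$ enjoys the extra $\alpha$ orders of regularity from Corollary \ref{co:2}; without postprocessing one would only get the slower rate $N^{-\frac{\gamma}{d}}$, so the whole gain rests on this bookkeeping.
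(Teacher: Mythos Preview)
Your proposal is correct and follows essentially the same route as the paper: the identical three-term splitting $u-u^N=(z-P_Nz)+(P_Nz-z^N)+(I-P_{N_1})W$, the same use of Lemma \ref{le:4} together with Corollary \ref{co:2} for the first term and the regularity of $W$ for the third, the same commutation-and-Lipschitz argument to reduce the Galerkin term to $\int_0^t\|u(s)-u^N(s)\|_{L^2(D,U)}\,\mathrm{d}s$, and Gr\"onwall to close. Your write-up is in fact slightly more explicit than the paper's (e.g.\ spelling out why $N_1^{-\gamma/d}\sim N^{-(\gamma+\alpha)/d}$ and why the series in \eqref{eq:3.5-1} converges), but there is no substantive difference in method.
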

\begin{proof}
In the first place, using the triangle inequality, Corollary \ref{co:2}, Lemma \ref{le:4} and \eqref{eq:4.5}, we obtain
\begin{equation}\label{eq:4.7}
\begin{split}
&\left\|u(t)-u^N(t)\right\|_{L^2(D,U)}\\
& \lesssim \left\|z(t)-z^N(t)\right\|_{L^2(D,U)}\\
&  ~~~+\left\|\int^{t}_0\sum^{\infty}_{i=N_1+1}\lambda_i^{-\frac{\alpha}{2}}\sin\left(\lambda_i^{\frac{\alpha}{2}}(t_m-s)\right)\sigma_i\phi_i(x)\mathrm{d}\beta^i_H(s)\right\|_{L^2(D,U)}\\
&\lesssim\left\|z(t)-P_Nz(t)\right\|_{L^2(D,U)}+\left\|P_Nz(t)-z^N(t)\right\|_{L^2(D,U)}+\frac{t^H}{\varepsilon}N_1^{-\frac{\gamma}{d}}\\
&\lesssim\left\|P_Nz(t)-z^N(t)\right\|_{L^2(D,U)}+N^{-\frac{\gamma+\alpha}{d}}\left(\frac{t^H}{\varepsilon}+\left\|u_0\right\|_{L^2(D,\dot{U}^{\gamma+\alpha})}+\left\|v_0\right\|_{L^2(D,\dot{U}^{\gamma})}\right).
\end{split}
\end{equation}
Then we need to estimate the bound of $\left\|P_Nz(t)-z^N(t)\right\|_{L^2(D,U)}$. The definition of projection operator $P_N$ implies that $P_NA^{-\frac{\alpha}{2}}\sin\left(A^{\frac{\alpha}{2}}t\right)f=A^{-\frac{\alpha}{2}}\sin\left(A^{\frac{\alpha}{2}}t\right)P_Nf.$
Thus, first performing $P_N$ on \eqref{eq:3.7-2} and then doing subtraction with respect to \eqref{eq:4.4-11} leads to
\begin{eqnarray*}
&&\left\|P_Nz(t)-z^N(t)\right\|_{L^2(D,U)}\\
&&=\left\|\int^{t}_{0}A^{-\frac{\alpha}{2}}\sin\left(A^{\frac{\alpha}{2}}(t_{m+1}-s)\right)\left(f_N\left(u(s)\right)-f_N\left(u^N(s)\right)\right)\mathrm{d}s\right\|_{L^2(D,U)}\\
&&\lesssim\int^{t}_{0}\left\|u(s)-u^N(s)\right\|_{L^2(D,U)}\mathrm{d}s.
\end{eqnarray*}
Then the above estimates and the Gr\"onwall inequality imlpy
\begin{eqnarray}\label{eq:4.8}
\left\|u(t)-u^N(t)\right\|_{L^2(D,U)}&\lesssim & N^{-\frac{\gamma+\alpha}{d}}\left(\frac{t^H}{\varepsilon}+\left\|u_0\right\|_{L^2(D,\dot{U}^{\gamma+\alpha})}+\left\|v_0\right\|_{L^2(D,\dot{U}^{\gamma})}\right).
\end{eqnarray}
If $N>1$, choosing $\varepsilon=\frac{1}{\log(N)}$, we have
\begin{eqnarray*}
\left\|u(t)-u^N(t)\right\|_{L^2(D,U)}&\lesssim & N^{-\frac{2\alpha+4\rho+2\alpha-d-\varepsilon}{2d}}\left(\frac{t^H}{\varepsilon}+\left\|u_0\right\|_{L^2(D,\dot{U}^{\gamma+\alpha})}+\left\|v_0\right\|_{L^2(D,\dot{U}^{\gamma})}\right)\\
&\lesssim & N^{-\frac{4\alpha+4\rho-d}{2d}}\left(t^H\log(N)+\left\|u_0\right\|_{L^2(D,\dot{U}^{\gamma+\alpha})}+\left\|v_0\right\|_{L^2(D,\dot{U}^{\gamma})}\right).
\end{eqnarray*}
\end{proof}

\section{Fully discrete scheme}\label{sec:5}
In this section, we concern the time discretization of \eqref{eq:4.4}. Meanwhile the error estimates of the fully discrete scheme are derived.

Let $z^N_{m}$ and $\bar{z}^N_{m}$ denote respectively the approximation of $z^N(t_{m})$ and $\dot{z}^N(t_{m})$ with
fixed time step size $\tau=\frac{T}{M}$ and $t_m=m\tau$ $(m=0,1,2,\dots,M)$. Using the stochastic trigonometric method, we can get the full discrete scheme of \eqref{eq:3.2}
\begin{eqnarray}\label{eq:5.1}
\left[\begin{array}{c}
z^N_{m+1}\\[1.5mm]
\bar{z}^N_{m+1}
\end{array} \right]&=&\left[\begin{array}{cc}
\cos\left(A^{\frac{\alpha}{2}}\tau\right)&A^{-\frac{\alpha}{2}}\sin\left(A^{\frac{\alpha}{2}}\tau\right)\\[1.5mm]
-A^{\frac{\alpha}{2}}\sin\left(A^{\frac{\alpha}{2}}\tau\right)&\cos\left(A^{\frac{\alpha}{2}}\tau\right)
\end{array} \right]\left[\begin{array}{c}z^N_{m}\\[1.5mm]
\bar{z}^N_{m}
\end{array} \right]\\
&+&\tau\left[\begin{array}{c}A^{-\frac{\alpha}{2}}\sin\left(A^{\frac{\alpha}{2}}\tau\right)f_N\left(u^N_{m}\right)\nonumber\\[1.5mm]
\cos\left(A^{\frac{\alpha}{2}}\tau\right)f_N\left(u^N_{m}\right)
\end{array} \right],
\end{eqnarray}
where
\begin{equation*}
u^N_m=z^N_m+P_{N_1}\int^{t_{m}}_0A^{-\frac{\alpha}{2}}\sin\left(A^{\frac{\alpha}{2}}(t_{m}-s)\right)\mathrm{d}B_H(s).
\end{equation*}
By using recursion form of \eqref{eq:5.1}, we get
\begin{eqnarray}\label{eq:5.1-1}
\left[\begin{array}{c}
z^N_{m+1}\\[1.5mm]
\bar{z}^N_{m+1}
\end{array} \right]&=&\left[\begin{array}{cc}
\cos\left(A^{\frac{\alpha}{2}}t_{m+1}\right)&A^{-\frac{\alpha}{2}}\sin\left(A^{\frac{\alpha}{2}}t_{m+1}\right)\\[1.5mm]
-A^{\frac{\alpha}{2}}\sin\left(A^{\frac{\alpha}{2}}t_{m+1}\right)&\cos\left(A^{\frac{\alpha}{2}}t_{m+1}\right)
\end{array} \right]\left[\begin{array}{c}u^N_{0}\\[1.5mm]
v^N_{0}
\end{array} \right]\\
&&+\tau\sum^{m}_{j=0}\left[\begin{array}{c}A^{-\frac{\alpha}{2}}\sin\left(A^{\frac{\alpha}{2}}\left(t_{m+1}-t_j\right)\right)f_N\left(u^N_{j}\right)\\[1.5mm]
\cos\left(A^{\frac{\alpha}{2}}\left(t_{m+1}-t_j\right)\right)f_N\left(u^N_{j}\right)
\end{array} \right].\nonumber
\end{eqnarray}
Then we get the approximations of $u(t)$ and $v(t)$, that is,
\begin{eqnarray}\label{eq:5.2}
u^N_{m+1}=z^N_{m+1}+P_{N_1}\int^{t_{m+1}}_0A^{-\frac{\alpha}{2}}\sin\left(A^{\frac{\alpha}{2}}(t_{m+1}-s)\right)\mathrm{d}B_H(s)
\end{eqnarray}
and
\begin{eqnarray}\label{eq:5.2-1}
v^N_{m+1}=\bar{z}^N_{m+1}+P_{N_1}\int^{t_{m+1}}_0\cos\left(A^{\frac{\alpha}{2}}(t_{m+1}-s)\right)\mathrm{d}B_H(s).
\end{eqnarray}
As $\frac{1}{2}<H<1$, although $\int^{t_{m+1}}_0\sin\left(A^{\frac{\alpha}{2}}(t_{m+1}-s)\right)\mathrm{d}B_H(s)$ is a Gaussian process, it is difficult to accurately simulate this process; thus we give the approximation of stochastic integral, that is
\begin{eqnarray}\label{eq:5.2-11}
P_{N_1}\sum^m_{j=0}\int^{t_{j+1}}_{t_{j}}A^{-\frac{\alpha}{2}}\sin\left(A^{\frac{\alpha}{2}}(t_{m+1}-t_j)\right)\mathrm{d}B_H(s).
\end{eqnarray}
Using Lemma \ref{le:02}, we obtain the error estimate
\begin{equation*}
\begin{split}
\mathrm{E}&\left[\left\|P_{N_1}\sum^m_{j=0}\int^{t_{j+1}}_{t_{j}}A^{-\frac{\alpha}{2}}\left(\sin\left(A^{\frac{\alpha}{2}}(t_{m+1}-s)\right)-\sin\left(A^{\frac{\alpha}{2}}(t_{m+1}-t_j)\right)\right)\mathrm{d}B_H(s)\right\|^2\right]\\
\lesssim&t^{2H}_{m+1}\tau^{\min\left\{\frac{2\gamma}{\alpha},2\right\}}\sum^{N_1}_{i=0}\lambda_i ^{\min\left\{\gamma,\alpha\right\}-\alpha-2\rho},
\end{split}
\end{equation*}
which implies that this approximation \eqref{eq:5.2-11} does not change  the temporal convergence rate of scheme \eqref{eq:5.1}.
The proof of this estimate is done in the same way as \eqref{eq:5.9-1}. For $H=\frac{1}{2}$, the simulation of stochastic integral is easily implementable without approximation.
We now investigate the error estimates of the fully discrete scheme \eqref{eq:5.1}. The triangle inequality implies that
\begin{eqnarray*} \left\|u(t_m)-u^N_m\right\|_{L^2(D,U)}\lesssim\left\|u(t_m)-u^N(t_m)\right\|_{L^2(D,U)}+\left\|u^N(t_m)-u^N_m\right\|_{L^2(D,U)}.
\end{eqnarray*}
Thus, we need to give the bound estimate of $\left\|u^N(t_m)-u^N_m\right\|_{L^2(D,U)}$. This bound estimate can be obtained by using the time H\"older regularity of $u^N(t)$, \eqref{eq:4.5}, and \eqref{eq:5.2}. Therefore combining the error estimate of the approximation \eqref{eq:5.2-11}, Corollary \ref{co:3}, and Theorem \ref{th:5} leads to the following results.
\begin{proposition} \label{Pro:6}
Let $u(t_{m+1})$ and $u^N_{m+1}$ be expressed by \eqref{eq:3.4} and \eqref{eq:5.3}, respectively. Suppose that Assumption \ref{as:2.1}-\ref{as:2.2} are satisfied. Suppose Corollary \ref{co:3} and Theorem \ref{th:5} hold. Let $N_1=\left[N^{\frac{\gamma+\alpha}{\gamma}}\right]$. Then
\begin{eqnarray*}
&&\left\|u(t_{m+1})-u^N_{m+1}\right\|_{L^2(D,U)}\\
&&\lesssim\tau\left(T^H+\left\|u_0\right\|_{L^2(D,\dot{U}^{\gamma+\alpha})}+\left\|v_0\right\|_{L^2(D,\dot{U}^{\gamma})}\right)\\
&&~~~+N^{-\frac{\gamma+\alpha}{d}}\left(\frac{T^H}{\varepsilon}+\left\|u_0\right\|_{L^2(D,\dot{U}^{\gamma+\alpha})}+\left\|v_0\right\|_{L^2(D,\dot{U}^{\gamma})}\right), \gamma>\alpha
\end{eqnarray*}
and
\begin{eqnarray*}
&&\left\|u(t_{m+1})-u^N_{m+1}\right\|_{L^2(D,U)}\\
&&\lesssim\tau^{\frac{\gamma}{\alpha}}\left(\frac{T^H}{\varepsilon}+\left\|u_0\right\|_{L^2(D,\dot{U}^{\gamma+\alpha})}+\left\|v_0\right\|_{L^2(D,\dot{U}^{\gamma})}\right)\\
&&~~~+N^{-\frac{\gamma+\alpha}{d}}\left(\frac{T^H}{\varepsilon}+\left\|u_0\right\|_{L^2(D,\dot{U}^{\gamma+\alpha})}+\left\|v_0\right\|_{L^2(D,\dot{U}^{\gamma})}\right), \gamma\le \alpha.
\end{eqnarray*}
\end{proposition}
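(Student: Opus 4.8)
The plan is to combine the two-term triangle-inequality split already displayed before the statement with Theorem~\ref{th:5}. The first term $\|u(t_{m+1})-u^N(t_{m+1})\|_{L^2(D,U)}$ is exactly what Theorem~\ref{th:5} controls (with $N_1=[N^{(\gamma+\alpha)/\gamma}]$), contributing the spatial factor $N^{-(\gamma+\alpha)/d}\bigl(T^H/\varepsilon+\|u_0\|_{L^2(D,\dot U^{\gamma+\alpha})}+\|v_0\|_{L^2(D,\dot U^{\gamma})}\bigr)$. Hence the whole job is the temporal error $\|u^N(t_{m+1})-u^N_{m+1}\|_{L^2(D,U)}$. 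Comparing \eqref{eq:4.5} with the fully discrete solution (equation \eqref{eq:5.2} with the stochastic convolution realised by the quadrature \eqref{eq:5.2-11}), the postprocessed Gaussian integrals cancel up to the quadrature remainder, so $u^N(t_{m+1})-u^N_{m+1}=\bigl(z^N(t_{m+1})-z^N_{m+1}\bigr)+R_{m+1}$, where $\|R_{m+1}\|_{L^2(D,U)}\lesssim T^H\tau^{\min\{\gamma/\alpha,1\}}(\cdots)$ by the estimate recorded just above the proposition --- an application of Lemma~\ref{le:02} when $\frac{1}{2}<H<1$, and of the independent-increment property of Brownian motion when $H=\frac{1}{2}$. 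This term is already of the announced temporal order and is carried along as a harmless forcing.

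It therefore remains to bound $e_{m}:=\|z^N(t_{m})-z^N_{m}\|_{L^2(D,U)}$, with $e_0=0$ since the initial data agree. Subtracting the recursion \eqref{eq:5.1-1} from the mild representation \eqref{eq:4.4-11} and regrouping,
\[
z^N(t_{m+1})-z^N_{m+1}=\sum_{j=0}^{m}\int_{t_j}^{t_{j+1}}\Bigl[A^{-\frac{\alpha}{2}}\sin\bigl(A^{\frac{\alpha}{2}}(t_{m+1}-s)\bigr)f_N(u^N(s))-A^{-\frac{\alpha}{2}}\sin\bigl(A^{\frac{\alpha}{2}}(t_{m+1}-t_j)\bigr)f_N(u^N_j)\Bigr]\mathrm{d}s,
\]
and split the bracketed integrand into three pieces: $J_1$, the difference $A^{-\alpha/2}\bigl(\sin(A^{\alpha/2}(t_{m+1}-s))-\sin(A^{\alpha/2}(t_{m+1}-t_j))\bigr)$ acting on $f_N(u^N(s))$; $J_2$, the operator $A^{-\alpha/2}\sin(A^{\alpha/2}(t_{m+1}-t_j))$ acting on $f_N(u^N(s))-f_N(u^N(t_j))$; and $J_3$, the same operator acting on $f_N(u^N(t_j))-f_N(u^N_j)$. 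For $J_1$ the Lipschitz bound $|\sin a-\sin b|\le|a-b|$ gives the operator estimate $\lesssim\tau$ (the troublesome factor $A^{\alpha/2}$ is cancelled by $A^{-\alpha/2}$ on each eigenfunction), which with Assumption~\ref{as:2.1} ($\|f_N(u)\|\lesssim1+\|u\|$) and the a priori bound of Corollary~\ref{co:3} yields $\|J_1\|_{L^2(D,U)}\lesssim\tau(\cdots)$. For $J_2$, the Lipschitz continuity of $f$ and the uniform boundedness of $A^{-\alpha/2}\sin(A^{\alpha/2}\cdot)$ reduce the contribution to $\sum_{j=0}^m\int_{t_j}^{t_{j+1}}\|u^N(s)-u^N(t_j)\|_{L^2(D,U)}\,\mathrm{d}s$, and the time-H\"older estimate of Corollary~\ref{co:3} --- of order $(s-t_j)^{\gamma/\alpha}$ for $\gamma\le\alpha$ and $(s-t_j)$ for $\gamma>\alpha$ --- gives $\|J_2\|_{L^2(D,U)}\lesssim T\tau^{\min\{\gamma/\alpha,1\}}(\cdots)$. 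For $J_3$, since the postprocessed stochastic integrals in $u^N(t_j)$ and $u^N_j$ again cancel, $\|f_N(u^N(t_j))-f_N(u^N_j)\|_{L^2(D,U)}\lesssim e_j+\|R_j\|_{L^2(D,U)}$, so $\|J_3\|_{L^2(D,U)}\lesssim\tau\sum_{j=0}^m e_j+\tau\sum_{j=0}^m\|R_j\|_{L^2(D,U)}$.

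Collecting these bounds and using $\tau\le\tau^{\gamma/\alpha}$ for $\tau<1$ when $\gamma\le\alpha$ gives $e_{m+1}\lesssim\tau^{\min\{\gamma/\alpha,1\}}(\cdots)+\tau\sum_{j=0}^m e_j$, whereupon the discrete Gr\"onwall inequality yields $e_{m+1}\lesssim\tau^{\min\{\gamma/\alpha,1\}}\,\mathrm{e}^{T}(\cdots)$; adding the spatial contribution of Theorem~\ref{th:5} produces exactly the two displayed estimates (the case $\gamma>\alpha$, where $\min\{\gamma/\alpha,1\}=1$, and the case $\gamma\le\alpha$), the constant $(\cdots)$ tracking that of Corollary~\ref{co:3} and Theorem~\ref{th:5}, namely $T^H/\varepsilon$ (or $T^H$ when $\gamma>\alpha$) plus $\|u_0\|_{L^2(D,\dot U^{\gamma+\alpha})}+\|v_0\|_{L^2(D,\dot U^{\gamma})}$. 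I expect the main obstacle to be the bookkeeping in $J_2$: recognising that the temporal rate is limited by the time-H\"older exponent of the semidiscrete solution from Corollary~\ref{co:3} (hence the two regimes $\gamma\gtrless\alpha$) rather than by the regularity of the sine propagator, and checking that every a priori bound invoked there is uniform in $N$. A secondary delicate point is the quadrature remainder $R_{m+1}$ for $H\in(\frac{1}{2},1)$, which requires the covariance formula of Lemma~\ref{le:02} with the singular weight $|r-r_1|^{2H-2}$, following the template of \eqref{eq:3.5-1}.
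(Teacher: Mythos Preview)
Your proposal is correct and follows essentially the same route the paper indicates (``similar to the proof of Theorem~\ref{th:7}''): the triangle split into the spatial piece handled by Theorem~\ref{th:5} and the temporal piece, then for the latter the subtraction of \eqref{eq:4.4-11} from \eqref{eq:5.1-1}, a three-term decomposition (propagator increment, $f_N(u^N(s))-f_N(u^N(t_j))$ via the H\"older bound of Corollary~\ref{co:3}, and the Gr\"onwall term $f_N(u^N(t_j))-f_N(u^N_j)$), followed by the discrete Gr\"onwall inequality; the stochastic-quadrature remainder is absorbed via the estimate displayed just before the proposition. Your identification of $J_2$ as the rate-limiting term, giving the dichotomy $\gamma\gtrless\alpha$, is exactly the mechanism the paper relies on.
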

The proof of this proposition is similar to the one of Theorem \ref{th:7} given in Appendix \ref{sec:C}.

As $\gamma>\alpha$, the derivative of $u(t)$ is time H\"older continuous in the sense of mean-squared $L^p$-norm, which means that the scheme \eqref{eq:5.1} is not optimal to discretize \eqref{eq:3.1} in time. Thus we can design a higher order scheme for the time discretization, as $v(t)$ belongs to $L^p(D,U)$. By modifying the scheme \eqref{eq:5.1}, we can get a better convergence rate than one of \eqref{eq:5.1} in time. The modified scheme is as follows
\begin{eqnarray}\label{eq:5.2-2}
\left[\begin{array}{l}
z^N_{1}\\[1.5mm]
\bar{z}^N_{1}
\end{array} \right]&=&\left[\begin{array}{cc}
\cos\left(A^{\frac{\alpha}{2}}\tau\right)&A^{-\frac{\alpha}{2}}\sin\left(A^{\frac{\alpha}{2}}\tau\right)\\[1.5mm]
-A^{\frac{\alpha}{2}}\sin\left(A^{\frac{\alpha}{2}}\tau\right)&\cos\left(A^{\frac{\alpha}{2}}\tau\right)
\end{array} \right]\left[\begin{array}{c}z^N_{0}\\[1.5mm]
\bar{z}^N_{0}
\end{array} \right]\\
&&+\left[\begin{array}{c}
A^{-\alpha}\left(1-\cos\left(A^{\frac{\alpha}{2}}\tau\right)\right)f_N\left(u^N_0\right)\\[1.5mm]
A^{-\frac{\alpha}{2}}\sin\left(A^{\frac{\alpha}{2}}\tau\right)f_N\left(u^N_0\right)\nonumber
\end{array} \right]
\end{eqnarray}
and for $m\ge1$,
\begin{eqnarray}\label{eq:5.2-02}
\left[\begin{array}{l}
z^N_{m+1}\\[1.5mm]
\bar{z}^N_{m+1}
\end{array} \right]&=&\left[\begin{array}{cc}
\cos\left(A^{\frac{\alpha}{2}}\tau\right)&A^{-\frac{\alpha}{2}}\sin\left(A^{\frac{\alpha}{2}}\tau\right)\\[1.5mm]
-A^{\frac{\alpha}{2}}\sin\left(A^{\frac{\alpha}{2}}\tau\right)&\cos\left(A^{\frac{\alpha}{2}}\tau\right)
\end{array} \right]\left[\begin{array}{c}z^N_{m}\\
\bar{z}^N_{m}
\end{array} \right]\\
&&+\left[\begin{array}{c}
A^{-\alpha}\left(1-\cos\left(A^{\frac{\alpha}{2}}\tau\right)\right)f_N\left(u^N_m\right)\\[1.5mm]
A^{-\frac{\alpha}{2}}\sin\left(A^{\frac{\alpha}{2}}\tau\right)f_N\left(u^N_m\right)
\end{array} \right]\nonumber\\
&&+\left[\begin{array}{c}
\frac{\tau A^{-\alpha}-A^{-\frac{3\alpha}{2}}\sin\left(A^{\frac{\alpha}{2}}\tau\right)}{\tau} \left(f_N\left(u^N_m\right)-f_N\left(u^N_{m-1}\right)\right)\nonumber\\[1.5mm]
\frac{A^{-\alpha}-A^{-\alpha}\cos\left(A^{\frac{\alpha}{2}}\tau\right)}{\tau} \left(f_N\left(u^N_m\right)-f_N\left(u^N_{m-1}\right)\right)
\end{array} \right].\nonumber
\end{eqnarray}
Then a classic application of recursion gives
\begin{eqnarray}\label{eq:5.3}
u^N_{m+1}&=&\cos\left(A^{\frac{\alpha}{2}}t_{m+1}\right)u^N_0+A^{-\frac{\alpha}{2}}\sin\left(A^{\frac{\alpha}{2}}t_{m+1}\right)v^N_0\\
&&+\sum^{m}_{j=0}A^{-\alpha}\left(\cos\left(A^{\frac{\alpha}{2}}(t_{m+1}-t_{j+1})\right)-\cos\left(A^{\frac{\alpha}{2}}(t_{m+1}-t_{j})\right)\right)f_N\left(u^N_j\right)\nonumber\\
&&+\sum^{m}_{j=1} A^{-\alpha}\cos\left(A^{\frac{\alpha}{2}}(t_{m+1}-t_{j+1})\right)\left(f_N\left(u^N_j\right)-f_N\left(u^N_{j-1}\right)\right)\nonumber\\
&&-\sum^{m}_{j=1}\frac{A^{-\alpha}\int^{t_{j+1}}_{t_{j}}\cos\left(A^{\frac{\alpha}{2}}(t_{m+1}-s)\right)\mathrm{d}s}{\tau}\left(f_N\left(u^N_j\right)-f_N\left(u^N_{j-1}\right)\right)\nonumber\\
&&+P_{N_1}\int^{t_{m+1}}_0A^{-\frac{\alpha}{2}}\sin\left(A^{\frac{\alpha}{2}}(t_{m+1}-s)\right)\mathrm{d}B_H(s)\nonumber
\end{eqnarray}
and
\begin{eqnarray}\label{eq:5.3-1}
v^N_{m+1}&=&-A^{\frac{\alpha}{2}}\sin\left(A^{\frac{\alpha}{2}}t_{m+1}\right)u^N_0+\cos\left(A^{\frac{\alpha}{2}}t_{m+1}\right)v^N_0\\
&&-\sum^{m}_{j=0}A^{-\frac{\alpha}{2}}\left(\sin\left(A^{\frac{\alpha}{2}}(t_{m+1}-t_{j+1})\right)-\sin\left(A^{\frac{\alpha}{2}}(t_{m+1}-t_{j})\right)\right)f_N\left(u^N_j\right)\nonumber\\
&&-\sum^{m}_{j=1} A^{-\frac{\alpha}{2}}\sin\left(A^{\frac{\alpha}{2}}(t_{m+1}-t_{j+1})\right)\left(f_N\left(u^N_j\right)-f_N\left(u^N_{j-1}\right)\right)\nonumber\\
&&+\sum^{m}_{j=1}\frac{A^{-\frac{\alpha}{2}}\int^{t_{j+1}}_{t_{j}}\sin\left(A^{\frac{\alpha}{2}}(t_{m+1}-s)\right)\mathrm{d}s}{\tau}\left(f_N\left(u^N_j\right)-f_N\left(u^N_{j-1}\right)\right)\nonumber\\
&&+P_{N_1}\int^{t_{m+1}}_0\cos\left(A^{\frac{\alpha}{2}}(t_{m+1}-s)\right)\mathrm{d}B_H(s).\nonumber
\end{eqnarray}
For $\frac{1}{2}<H<1$ and $\gamma>\alpha$, if the time steps of  \eqref{eq:5.2-11} and \eqref{eq:5.2-02} are the same, the desired convergence rate can not be got. Thus, a more precise approximation of stochastic integral than \eqref{eq:5.2-11} is expected. The inequality $\cos(t_{m+1}-r)-\cos(t_{m+1}-t_{j})\lesssim |r-t_{j}|^\theta \ (0\le\theta\le1)$ and the error equation $\int^{t_{j+1}}_{t_{j}}\int^{s}_{t_{j}}\cos\left(A^{\frac{\alpha}{2}}(t_{m+1}-r)\right)\mathrm{d}r\mathrm{d}B_H(s) $ of \eqref{eq:5.2-11} imply that we can improve the accuracy of approximation for stochastic integral by the following scheme, that is,
\begin{equation}\label{eq:5.9-0}
P_{N_1}\sum^m_{j=0}\int^{t_{j+1}}_{t_{j}}\left(A^{-\frac{\alpha}{2}}\sin\left(A^{\frac{\alpha}{2}}(t_{m+1}-t_j)\right)-(s-t_j)\cos\left(A^{\frac{\alpha}{2}}(t_{m+1}-t_{j})\right)\right)\mathrm{d}B_H(s),
\end{equation}
which ensures the implementation of scheme \eqref{eq:5.2-02} without loss of convergence rate; and Equation \eqref{eq:5.9-0} is easy to simulate by using its explicit variance. Using Lemma \ref{le:02}, the following error estimate of the approximation for stochastic integral is obtained.
\begin{equation}\label{eq:5.9-1}
\begin{split}
\mathrm{E}&\left[\left\|P_{N_1}\sum^m_{j=0}\int^{t_{j+1}}_{t_{j}}\left(A^{-\frac{\alpha}{2}}\sin\left(A^{\frac{\alpha}{2}}(t_{m+1}-s)\right)\right.\right.\right.\\
&-\left.\left.\left.A^{-\frac{\alpha}{2}}\sin\left(A^{\frac{\alpha}{2}}(t_{m+1}-t_j)\right)+(s -t_j)\cos\left(A^{\frac{\alpha}{2}}(t_{m+1}-t_{j})\right)\right)\mathrm{d}B_H(s)\right\|^2\right]\\
=&\mathrm{E}\left[\left\|P_{N_1}\sum^m_{j=0}\int^{t_{j+1}}_{t_j}\int^s_{t_j}\left(\cos\left(A^{\frac{\alpha}{2}}(t_{m+1}-t_{j})\right)\right.\right.\right.\\
&-\left.\left.\left.\cos\left(A^{\frac{\alpha}{2}}(t_{m+1}-r_1)\right)\right)\mathrm{d}r_1\mathrm{d}B_H(s)\right\|^2\right]\\
\lesssim&\sum^{N_1}_{i=0}\lambda_i ^{-2\rho}\sum^m_{j=0}\int^{t_{j+1}}_{t_j}\int^s_{t_j}\left|\cos\left(\lambda_i ^{\frac{\alpha}{2}}(t_{m+1}-t_{j})\right)-\cos\left(\lambda_i ^{\frac{\alpha}{2}}(t_{m+1}-r_1)\right)\right|\mathrm{d}r_1\\
&\times\sum^m_{k=0}\int^{t_{k+1}}_{t_k}\int^t_{t_k}\left|\cos\left(\lambda_i ^{\frac{\alpha}{2}}(t_{m+1}-t_{k})\right)-\cos\left(\lambda_i ^{\frac{\alpha}{2}}(t_{m+1}-r)\right)\right|\mathrm{d}r
|s-t|^{2H-2}\mathrm{d}s\mathrm{d}t\\
\lesssim&\tau^{\min\left\{\frac{2\gamma}{\alpha},4\right\}}\sum^{N_1}_{i=0}\lambda_i ^{\min\{\gamma-\alpha,\alpha\}-2\rho}\sum^m_{j=0}\int^{t_{j+1}}_{t_j}\sum^m_{k=0}\int^{t_{k+1}}_{t_k}|s-t|^{2H-2}\mathrm{d}s\mathrm{d}t\\
=&\tau^{\min\left\{\frac{2\gamma}{\alpha},4\right\}}\sum^{N_1}_{i=0}\lambda_i ^{\min\{\gamma-\alpha,\alpha\}-2\rho}\int^{t_{m+1}}_0\int^{t_{m+1}}_0|s-t|^{2H-2}\mathrm{d}s\mathrm{d}t\\
\lesssim& t^{2H}_{m+1}\tau^{\min\left\{\frac{2\gamma}{\alpha},4\right\}}\sum^{N_1}_{i=0}\lambda_i ^{\min\{\gamma-\alpha,\alpha\}-2\rho},
\end{split}
\end{equation}
the second inequality of which uses the fact
\begin{equation*}
\cos\left(\lambda_i ^{\frac{\alpha}{2}}(t_{m+1}-s)\right)-\cos\left(\lambda_i ^{\frac{\alpha}{2}}(t_{m+1}-t)\right)\lesssim\lambda_i^{\min\{\frac{\gamma-\alpha}{2},\frac{\alpha}{2}\}}|s-t|^{\min\left\{\frac{\gamma-\alpha}{\alpha},1\right\}}.
\end{equation*}

We end this section by showing the error estimates of the fully discrete scheme \eqref{eq:5.2-02} in $L^2(D,U)$ norm.

\begin{theorem} \label{th:7}
Let $u(t_{m+1})$ and $u^N_{m+1}$ be expressed by \eqref{eq:3.4} and \eqref{eq:5.3}, respectively. Suppose that $f(u)\in C^1(\mathbb{R})$ and $f'(u)$ satisfies the Lipschitz condition, and  the conditions of Corollary \ref{co:3} and Theorem \ref{th:5} are satisfied. Take $N>1$ and $0<\tau<1$. If $\gamma>\alpha$ and $N_1=\left[N^{\frac{\gamma+\alpha}{\gamma}}\right]$, then

$\mathrm{(i)}$ for $\alpha<\gamma\le2\alpha$,
\begin{eqnarray*}
&&\left\|u(t_{m+1})-u^N_{m+1}\right\|_{L^2(D,U)}\\
&&\lesssim\tau^{\frac{\alpha+2\rho-\frac{d}{2}}{\alpha}}\left(T^H|\log(\tau)|+\left\|u_0\right\|_{L^2(D,\dot{U}^{\gamma+\alpha})}+\left\|v_0\right\|_{L^2(D,\dot{U}^{\gamma})}\right)\\
&&~~~+N^{-\frac{2\rho+2\alpha-\frac{d}{2}}{d}}\left(T^H\log(N)+\left\|u_0\right\|_{L^2(D,\dot{U}^{\gamma+\alpha})}+\left\|v_0\right\|_{L^2(D,\dot{U}^{\gamma})}\right);\\
\end{eqnarray*}

$\mathrm{(ii)}$ for $\gamma>2\alpha$,
\begin{eqnarray*}
&&\left\|u(t_{m+1})-u^N_{m+1}\right\|_{L^2(D,U)}\\
&&\lesssim\tau^{2}\left(T^H+\left\|u_0\right\|_{L^2(D,\dot{U}^{\gamma+\alpha})}+\left\|v_0\right\|_{L^2(D,\dot{U}^{\gamma})}\right)\\
&&~~~+N^{-\frac{2\rho+2\alpha-\frac{d}{2}}{d}}\left(T^H\log(N)+\left\|u_0\right\|_{L^2(D,\dot{U}^{\gamma+\alpha})}+\left\|v_0\right\|_{L^2(D,\dot{U}^{\gamma})}\right).\\
\end{eqnarray*}
\end{theorem}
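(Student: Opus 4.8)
We outline the proof.

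The plan is to decompose, via the triangle inequality,
$\|u(t_{m+1})-u^N_{m+1}\|_{L^2(D,U)}\le\|u(t_{m+1})-u^N(t_{m+1})\|_{L^2(D,U)}+\|u^N(t_{m+1})-u^N_{m+1}\|_{L^2(D,U)}$,
bound the first (spatial) summand by Theorem~\ref{th:5} with $N_1=[N^{(\gamma+\alpha)/\gamma}]$ and, taking $\varepsilon=1/\log N$, turn the resulting $N^{-(\gamma+\alpha)/d}(\,\cdot\,)$ bound into the stated $N^{-(2\rho+2\alpha-d/2)/d}(T^H\log N+\cdots)$ form; the stochastic-integral quadrature error \eqref{eq:5.9-1}, summed over $i\le N_1$ and square-rooted, is $\lesssim t_{m+1}^H\tau^{\min\{\gamma/\alpha,2\}}$ and is appended at the end, being never dominant. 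Everything then reduces to the temporal error $\mathcal E_{m+1}:=\|u^N(t_{m+1})-u^N_{m+1}\|_{L^2(D,U)}$.

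For the temporal error I would set up an error recursion. Subtracting the representation \eqref{eq:5.3} of $u^N_{m+1}$ from the mild form of $u^N(t_{m+1})$ coming from \eqref{eq:4.5}--\eqref{eq:4.4-11}, the stochastic convolutions $P_{N_1}\int_0^{t_{m+1}}A^{-\frac{\alpha}{2}}\sin(A^{\frac{\alpha}{2}}(t_{m+1}-s))\,\mathrm dB_H(s)$ cancel exactly; regrouping the three $f_N$-sums in \eqref{eq:5.3} by the identity $\int_{t_j}^{t_{j+1}}(s-t_j)A^{-\frac{\alpha}{2}}\sin(A^{\frac{\alpha}{2}}(t_{m+1}-s))\,\mathrm ds=\tau A^{-\alpha}\cos(A^{\frac{\alpha}{2}}(t_{m+1}-t_{j+1}))-A^{-\alpha}\int_{t_j}^{t_{j+1}}\cos(A^{\frac{\alpha}{2}}(t_{m+1}-s))\,\mathrm ds$ identifies $u^N_{m+1}$ as the scheme that replaces $f_N(u^N(s))$ on each panel $[t_j,t_{j+1}]$, $j\ge1$, by the linear extrapolant $\mathcal Q_j(s)=f_N(u^N_j)+\tfrac{s-t_j}{\tau}\bigl(f_N(u^N_j)-f_N(u^N_{j-1})\bigr)$, and on $[t_0,t_1]$ by the constant $f_N(u^N_0)$. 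Writing $e_j=u^N(t_j)-u^N_j$ and letting $\bar{\mathcal Q}_j$ be the same extrapolant built from the exact values, the Lipschitz bound of Assumption~\ref{as:2.1} gives $\|\mathcal Q_j-\bar{\mathcal Q}_j\|_{L^2(D,U)}\lesssim\|e_j\|_{L^2(D,U)}+\|e_{j-1}\|_{L^2(D,U)}$; since $A^{-\frac{\alpha}{2}}\sin(A^{\frac{\alpha}{2}}\,\cdot\,)$ is a bounded operator on $U$, these terms contribute $\lesssim\tau\sum_{j\le m}\|e_j\|_{L^2(D,U)}$, and a discrete Gr\"onwall inequality reduces $\mathcal E_{m+1}$ to the consistency term $\mathcal C_{m+1}=\sum_{j=0}^m\int_{t_j}^{t_{j+1}}A^{-\frac{\alpha}{2}}\sin(A^{\frac{\alpha}{2}}(t_{m+1}-s))\bigl[f_N(u^N(s))-\bar{\mathcal Q}_j(s)\bigr]\,\mathrm ds$.

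The heart is bounding $\mathcal C_{m+1}$. The $j=0$ panel is $O(\tau^2)$ by the Lipschitz estimate for $f$ and the (here Lipschitz, since $\gamma>\alpha$) time regularity of $u^N$ from Corollary~\ref{co:3}. For $j\ge1$, with $g(s):=f_N(u^N(s))$, $\dot u^N=v^N$, one has $g(s)-\bar{\mathcal Q}_j(s)=\int_{t_j}^s[g'(r)-g'(t_j)]\,\mathrm dr-\tfrac{s-t_j}{\tau}\int_{t_{j-1}}^{t_j}[g'(r)-g'(t_j)]\,\mathrm dr$ with $g'(r)=f_N'(u^N(r))\bigl(\dot z^N(r)+w^N(r)\bigr)$, $w^N(r)=P_{N_1}\int_0^r\cos(A^{\frac{\alpha}{2}}(r-\sigma))\,\mathrm dB_H(\sigma)$. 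The part built from $f_N'(u^N)\dot z^N$ is time-Lipschitz, because $\ddot z^N=-A^\alpha z^N+f_N(u^N)$ is bounded in $L^p(D,U)$ uniformly in $N$ (as in Corollary~\ref{co:2}, using $\gamma>\alpha$), and after summing the $\sim T/\tau$ panels it yields the classical $O(\tau^2)$. The part carrying the rough velocity-of-noise $w^N$ is \emph{not} differentiated again: one freezes the smooth coefficient $f_N'(u^N(\cdot))$ at $t_j$, leaving the genuinely noise-linear factor, which is the extrapolation error of the stochastic convolution $P_{N_1}\int_0^{\cdot}A^{-\frac{\alpha}{2}}\sin(A^{\frac{\alpha}{2}}(\cdot-\sigma))\,\mathrm dB_H(\sigma)$ -- equivalently a second difference of $s\mapsto A^{-\frac{\alpha}{2}}\sin(A^{\frac{\alpha}{2}}(t_{m+1}-s))$ -- the residual commutator being absorbed via the time-Lipschitz continuity of $u^N$; taking expectations and applying Lemma~\ref{le:02} exactly as in the derivation of \eqref{eq:5.9-1} bounds this contribution by $t_{m+1}^{2H}\tau^{\min\{2\gamma/\alpha,4\}}\sum_{i\le N_1}\lambda_i^{\min\{\gamma-\alpha,\alpha\}-2\rho}$-type quantities, and $\gamma>\alpha$ keeps these $N_1$-truncated sums bounded uniformly in $N$. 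Collecting, $\mathcal C_{m+1}\lesssim\tau^{\min\{\gamma/\alpha,2\}}(\,\cdot\,)$, and optimizing $\varepsilon\sim1/|\log\tau|$ (which turns $1/\varepsilon$ into $|\log\tau|$ and $\tau^{\gamma/\alpha}$ into $\tau^{(\alpha+2\rho-d/2)/\alpha}$) gives, for $\alpha<\gamma\le2\alpha$, $\mathcal E_{m+1}\lesssim\tau^{(\alpha+2\rho-d/2)/\alpha}(T^H|\log\tau|+\cdots)$, and for $\gamma>2\alpha$ (where the interpolation order saturates at $2$), $\mathcal E_{m+1}\lesssim\tau^2(T^H+\cdots)$. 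Adding the spatial bound of Theorem~\ref{th:5} and the error \eqref{eq:5.9-1} completes the proof.

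The step I expect to be the main obstacle is precisely this consistency estimate. Since $v^N=\dot u^N$ is only H\"older-continuous in time -- its noise component $P_{N_1}\int_0^t\cos(A^{\frac{\alpha}{2}}(t-s))\,\mathrm dB_H(s)$ is genuinely rough -- one cannot treat $f_N(u^N(\cdot))$ as a black-box $C^2$ function and obtain order $2$ directly; the delicate point is to split off the stochastically-linear part and control its quadrature error through Lemma~\ref{le:02} together with the second-order difference bounds on the sine and cosine operators (mirroring \eqref{eq:5.9-1}), while carrying the $\varepsilon$-dependence and the $N_1$-truncated eigenvalue sums so that, after the choices $N_1=[N^{(\gamma+\alpha)/\gamma}]$ and $\varepsilon\sim1/|\log\tau|$, they collapse to the stated rates. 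The threshold $\gamma=2\alpha$ is exactly where the modified scheme's interpolation order $1+\min\{(\gamma-\alpha)/\alpha,1\}$ reaches $2$.
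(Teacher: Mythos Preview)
Your proposal is correct and follows essentially the same route as the paper. Both split the total error into the spatial piece (handled by Theorem~\ref{th:5} with $\varepsilon=1/\log N$) and the temporal piece, reduce the latter by a discrete Gr\"onwall argument to a consistency term, and then attack that consistency term by separating the smooth part of $v^N$ from its rough stochastic part, the latter being controlled through Lemma~\ref{le:02} exactly as in \eqref{eq:5.9-1}.

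The only differences are organizational. You frame the scheme as linear extrapolation of $f_N(u^N(\cdot))$ and split $v^N=\dot z^N+w^N$ from the outset, using boundedness of $\ddot z^N=-A^\alpha z^N+f_N(u^N)$ (valid since $\gamma>\alpha$ puts $A^\alpha z^N\in L^p(D,U)$) for the smooth half; the paper instead writes a first–order Taylor remainder $f_N(u^N(s))-f_N(u^N(t_j))-(s-t_j)f'_N(u^N(t_j))v^N(t_j)$ as $J_1$, a correction–term error $J_2$, and then decomposes $v^N(r)-\cos(A^{\frac{\alpha}{2}}(r-t_j))v^N(t_j)$ via the propagator identity \eqref{eq:4.7-1}. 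These are equivalent: your identity $\int_{t_j}^s w^N(r)\,\mathrm dr=\omega^N(s)-\omega^N(t_j)$ (with $\omega^N$ the $u$-level stochastic convolution) collapses your noise term to the same second–difference structure the paper obtains from \eqref{eq:4.7-1}, and both then freeze the bounded multiplier $f'_N(u^N(t_j))$ and invoke Lemma~\ref{le:02}. The paper additionally treats the case $H=\tfrac12$ separately, exploiting independence of Brownian increments to make the cross terms vanish; you should be prepared to do the same, since Lemma~\ref{le:02} as stated covers only $H\in(\tfrac12,1)$.
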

The detailed proof of Theorem \ref{th:7} is given in Appendix \ref{sec:C}.

Proposition \ref{Pro:6} and Theorem \ref{th:7} show that one can choose the appropriate technique to solve \eqref{eq:1.05}, i.e., when $\alpha\le\gamma$, use \eqref{eq:5.1} to discretize \eqref{eq:1.05}, and if $\alpha>\gamma$, the scheme \eqref{eq:5.2-02} can be chosen to obtain the approximation of $u(t)$. In fact, when $u_0\in{L^2(D,\dot{U}^{\gamma})}$ and $v_0\in{L^2(D,\dot{U}^{\gamma-\alpha})}$, the temporal rates of convergence still hold in Proposition \ref{Pro:6} and Theorem \ref{th:7}.

\section{Numerical experiments}\label{sec:6}

In this section, we present numerical examples to verify the theoretical results and the effect of the parameters $\alpha$ and $\rho$ on the convergence. All numerical errors are given in the sense of mean-squared $L^2$-norm.

We solve \eqref{eq:1.05} in the two-dimensional domain $D=(0,1)\times(0,1)$ by the proposed
scheme \eqref{eq:5.2-02} with $x=(x_1,x_2)$, the smooth initial data $u_0=\frac{\sin(\pi x_1)\sin(\pi x_2)}{2}$, and $v_0=\sin(4\pi x_1)\sin(4\pi x_2)$. In $D=(0,1)\times(0,1)$, the Dirichlet eigenpairs of $-\mathrm{\Delta}$ are $\lambda_{i,j}=\pi^2(i^2+j^2)$ and $\phi_{i,j}=2\sin(i\pi x_1)\sin(j\pi x_2)$ with $i,j=1,2,\dots,N$. Unless otherwise specified, we choose $f(u(t))=u(t)$. To calculate the convergence orders, the following formulas are used.
\begin{equation*}
\begin{split}
 \textrm{convergence rate in space}&=
 \frac{\ln\left(\left\|u^{aN}_{M}-u^{N}_{M}\right\|_{L^2(D,U)}/
\left\|u^{N}_{M}-u^{N/a}_{M}\right\|_{L^2(D,U)}\right)}{\ln a},\\
  \textrm{convergence rate in time}&=
 \frac{\ln\left(\left\|u^{N}_{aM}-u^{N}_{M}\right\|_{L^2(D,U)}/
\left\|u^{N}_{M}-u^{N}_{M/a}\right\|_{L^2(D,U)}\right)}{\ln a},
\end{split}
\end{equation*}
where the constant $a>1$.
In numerical simulations, the errors $\left\|u^{aN}_{M}-u^{N}_{M}\right\|_{L^2(D,U)}$ are calculated by Monte Carlo method, i.e.,
\begin{equation*}
\begin{split}
\left\|u^{aN,M}_{M}-u^{N,M}_{M}\right\|_{L^2(D,U)}&=\left(\mathrm{E}\left[\left\|u^{aN}_{M}-u^{N}_{M}\right\|^2\right]\right)^{\frac{1}{2}}\\
&\approx \left(\frac{1}{K}\sum^K_{k=1}\left\|u^{aN}_{M,k}-u^{N}_{M,k}\right\|^2\right)^{\frac{1}{2}}.
\end{split}
\end{equation*}
We take $K=1000$ as the number of the simulation trajectories. The symbol $k$ represents the $k$-th trajectory.

\begin{table}[H]
 \renewcommand\arraystretch{1.6}
\caption{Spatial convergence rates with $T=0.3$, $M =900$, $H=0.5$ and $\rho=1$.}\label{table:1}
\centering
\begin{tabular}{c c c c c c c c c}
\Xhline{1.2pt}
&$N$ & $\alpha=0.4$ & Rate &$\alpha=0.6$ &Rate & $\alpha=0.8$ &Rate  \\
\hline
$N_1=N$ &256&2.610e-04&     & 1.064e-04&     &4.810e-05&          \\
  &576&1.608e-04& 0.597& 5.910e-05& 0.725&2.365e-05& 0.875\\
  &1296&9.545e-05& 0.643& 3.220e-05& 0.749&1.173e-05& 0.865\\
\hline
$N_1=\left[N^{\frac{\gamma+\alpha}{\gamma}}\right]$ &256&1.051e-04&     & 2.367e-05&     &5.912e-06&          \\
  &576&4.787e-05& 0.970& 9.867e-06& 1.079&2.073e-06& 1.292\\
  &1296&2.175e-05& 0.973& 4.055e-06& 1.097&7.343e-07& 1.280\\
\hline
\end{tabular}
\end{table}
The spatial convergence rates of the scheme \eqref{eq:5.2-02} is tested with the end time $T=0.3$ and $M=900$, which ensures the spatial error is the dominant one.
%We consider the spatial convergence rates of the scheme \eqref{eq:5.2-02}. The end time $T=0.3$ and $M=900$ imply that the spatial error is the dominant error.
In Table \ref{table:1}, one can see that the spatial convergence rates tend to $\frac{2\rho+\alpha-1}{2}$ if $N_1=N$, and the convergence rates are approximately equal to $\frac{2\rho+2\alpha-1}{2}$ after postprocessing the stochastic integral $\left(N_1=\left[N^{\frac{\gamma+\alpha}{\gamma}}\right]\right)$. And the convergence rates of the spectral Galerkin method are improved, as $\alpha$ increases. The numerical results verify the theoretical ones.

\begin{figure}[htbp]
  \centering
  \label{fig:a}\includegraphics[scale=0.55]{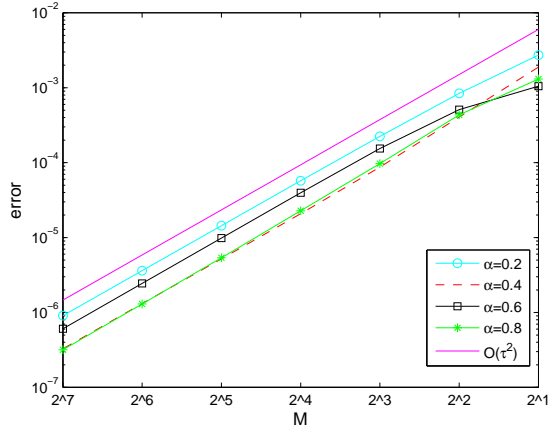}
  \caption{Temporal error convergence of the modified stochastic trigonometric method for the space-time white noise $(H=0.5)$.}
  \label{fig:fig1}
\end{figure}
Next, we observe the behavior of the temporal convergence. We solve the problem \eqref{eq:1.05} by using the scheme \eqref{eq:5.2-02} with $f(u(t))=u^2(t)$, $\rho=2.5$, $T=0.6$, and $N=400$ in Figures \ref{fig:fig1} and \ref{fig:fig2}.
The sufficiently big $\rho$ and $N$ guarantee that the dominant errors arise from the temporal approximation. As $H=\frac{1}{2}$, the simulation of the stochastic integral $\int^T_0\sin\left(\lambda_{i,j} (T-t) \right)\mathrm{d}\beta(t)$ is easily implementable by using explicit variance of the stochastic integral, which is $\frac{T}{2}-\frac{\sin\left(2\lambda_{i,j}T\right)}{4\lambda_{i,j}}$.
For $H\in\left(\frac{1}{2},1\right)$, one can obtain the approximation of the stochastic integral by using scheme \eqref{eq:5.9-0}. The simulation of the approximation is given in Appendix \ref{sec:B}. Figures \ref{fig:fig1} and \ref{fig:fig2} show that the temporal convergence rates have an order of $2$ by using the proposed scheme, as $\gamma>2\alpha$, and the convergence rates are independent of $H$.

\begin{figure}[htbp]
  \centering
  \label{fig:b}\includegraphics[scale=0.55]{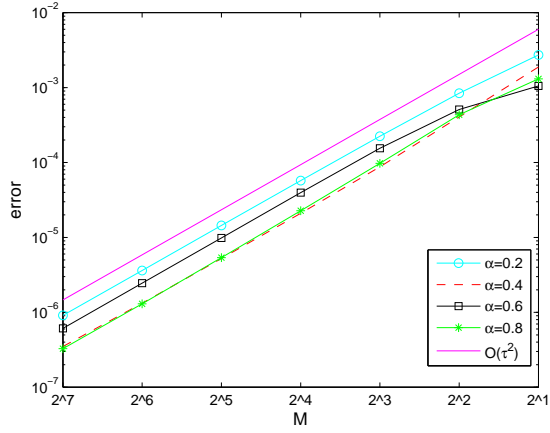}
  \caption{Temporal error convergence of the modified stochastic trigonometric method for the space-time fractional Gaussian noise $(H=0.75)$.}
  \label{fig:fig2}
\end{figure}

\begin{table}[H]
\renewcommand\arraystretch{1.6}
\caption{Time convergence rates with $N=10^6$, $T=0.6$, $H=0.5$, and $\rho=0.68$.}\label{table:2}
\centering
\begin{tabular}{c c c c c c c c }
\Xhline{1.2pt}
$M$ & $\alpha=0.5$ & Rate &$\alpha=0.7$ &Rate & $\alpha=0.9$ &Rate  \\
\hline
  4&1.400e-03&     & 2.414e-03&     &2.980e-03&          \\
  8&4.158e-04& 1.751& 8.575e-04& 1.493&1.134e-03&1.394\\
  16&1.220e-04& 1.769& 3.077e-04& 1.479&4.341e-04&1.385  \\
  \hline
\end{tabular}
\end{table}
As $\alpha<\gamma\le2\alpha$, the convergence rates of the proposed scheme are close to $\frac{\alpha+2\rho-1}{\alpha}$ in time.  For $H=\frac{1}{2}$, from Table \ref{table:2}, one can see that the temporal convergence rates reduce with the increase of $\alpha$, for fixing $\rho$. As $\alpha=0.5$, $0.7$, and $0.9$, the theoretical convergence rates are approximately  $1.720$, $1.514$, and $1.400$, respectively. For $H=0.6$, Table \ref{table:3} demonstrates that the time convergence rates increase with the increase of $\rho$, for fixing $\alpha$. The temporal theoretical convergence rates are approximately $1.556$, $1.667$, and $1.778$, for $\rho=0.75$, $0.80$, and $0.85$, respectively. Tables \ref{table:2} and \ref{table:3} show that  the numerical results confirm the error estimates in Theorem \ref{th:7}.

\begin{table}[H]
\renewcommand\arraystretch{1.6}
\caption{Time convergence rates with $N=10^6$, $T=0.6$, $H=0.6$, and $\alpha=0.9$.}\label{table:3}
\centering
\begin{tabular}{c c c c c c c c }
\Xhline{1.2pt}
$M$ & $\rho=0.75$ & Rate &$\rho=0.8$ &Rate & $\rho=0.85$ &Rate  \\
\hline
  4&4.568e-03 &     & 3.543e-03&     &2.907e-03&          \\
  8&1.559e-03 & 1.551&1.156e-03 & 1.616 &8.956e-04&1.699  \\
  16&5.330e-04 & 1.548&3.733e-04&1.630 &2.704e-04&1.728  \\
  \hline
\end{tabular}
\end{table}

\section{Conclusion}\label{sec:7}

This paper discusses the numerical schemes and their error analyses for the equation describing the wave propagation with attenuation and possible external disturbance. Two kinds of external noises (white noise and fractional Gaussian noise) are considered. The regularity results of the mild solution of the equation are obtained.  The spectral Galerkin method is used for space approximation and the stochastic trigonometric method for time approximation. The detailed error analyses are performed. The techniques of equivalent transformation and postprocessing the stochastic integral improve the convergence rate in space from  $(2\rho+\alpha-\frac{d}{2})/d$ to $(2\rho+2\alpha-\frac{d}{2})/d$.  For the temporal approximation, by modifying the stochastic trigonometric method, when $\gamma>\alpha$, the superlinear convergence is obtained. The convergence rates of the designed schemes are independent of hurst index $H$. The extensive numerical experiments confirm the theoretical results.

%In this paper, we attempt to investigate the high order scheme for stochastic space fractional wave equation forced by a white noise or fractional Gaussian noise. For the space approximation, the equation \eqref{eq:3.2} is transformed into an equivalent form \eqref{eq:3.7}. The spectral Galerkin scheme is used to discretize \eqref{eq:3.7}; then by postprocessing the stochastic integral, the convergence rate can be improved from $\frac{2\rho+\alpha-\frac{d}{2}}{d}$ to $\frac{2\rho+2\alpha-\frac{d}{2}}{d}$.
%For time discretization, by modifying the stochastic trigonometric method, we obtain a time convergence rate that is faster than 1, as $\gamma>\alpha$. To observe the time convergence rate of scheme \eqref{eq:5.2-02} by numerical experiment, the simulation of the stochastic integral $\int^T_0\sin\left(\lambda_{i} (T-t) \right)\mathrm{d}\beta_H(t)$ is a key. As $H\in\left(\frac{1}{2},1\right)$, since the explicit variance of the stochastic integral are unknown, we give the approximation of the stochastic integral in \eqref{eq:5.9-0}. And equation \eqref{eq:5.9-1} implies this approximation does not reduce time convergence rate of scheme \eqref{eq:5.2-02}. The theoretical analysis and numerical results also show that the convergence rate of scheme \eqref{eq:5.2-02} is independent of $H$ in time.

\appendix

\section{Proof of Theorem 5.2}\label{sec:C}
\begin{proof}
First, combining \eqref{eq:4.5}, \eqref{eq:5.3}, \eqref{eq:5.9-1}, the assumptions, and Corollary \ref{co:3} leads to
\begin{eqnarray}\label{eq:5.9-2}
~~~&&\
\left\|u^N(t_{m+1})-u^N_{m+1}\right\|_{L^2(D,U)}\\
&&\lesssim\left\|\sum^{m}_{j=1}\int^{t_{j+1}}_{t_j}A^{-\frac{\alpha}{2}}\sin\left(A^{\frac{\alpha}{2}}(t_{m+1}-s)\right)\left[f_N\left(u^N(s)\right)\right.\right.\nonumber\\
&&~~~-\left.\left.f_N\left(u^N(t_j)\right)-\int^s_{t_j}f'_N\left(u^N(t_j)\right)v^N(t_j)\mathrm{d}r\right]\mathrm{d}s\right\|_{L^2(D,U)}\nonumber\\
&&~~~+\left\|\sum^{m}_{j=1}A^{-\alpha}\frac{ \tau\cos\left(A^{\frac{\alpha}{2}}(t_{m+1}-t_{j+1})\right)-\int^{t_{j+1}}_{t_j}\cos\left(A^{\frac{\alpha}{2}}(t_{m+1}-s)\right)\mathrm{d}s}{\tau}\right.\nonumber\\
&&~~~\times\left.\left[\tau f'_N\left(u^N(t_j)\right)v^N(t_j)-f_N\left(u^N_j\right)+f_N\left(u^N_{j-1}\right)\right]\right\|_{L^2(D,U)}\nonumber\\
&&~~~+\left\|\sum^{m}_{j=1}\int^{t_{j+1}}_{t_j}A^{-\frac{\alpha}{2}}\sin\left(A^{\frac{\alpha}{2}}(t_{m+1}-s)\right)\left[f_N\left(u^N(t_j)\right)-f_N\left(u^N_j\right)\right]\mathrm{d}s\right\|_{L^2(D,U)}\nonumber\\
&&~~~+\left\|\int^{t_{1}}_{0}A^{-\frac{\alpha}{2}}\sin\left(A^{\frac{\alpha}{2}}(t_{m+1}-s)\right)\left[f_N\left(u^N(s)\right)-f_N\left(u^N_0\right)\right]\mathrm{d}s\right\|_{L^2(D,U)}\nonumber\\
&&~~~+\tau^{\min\left\{\frac{\gamma}{\alpha},2\right\}}\left(\sum^{N_1}_{i=0}\lambda_i ^{\min\{\gamma-\alpha,\alpha\}-2\rho}\right)^{\frac{1}{2}}\nonumber\\
&&\lesssim J_1+J_2+\sum^{m}_{j=0}\tau\left\|u^N(t_j)-u^N_j\right\|_{L^2(D,U)}+\tau^2\nonumber\\
&&~~~+\tau^{\min\left\{\frac{\gamma}{\alpha},2\right\}}\left(\sum^{N_1}_{i=0}\lambda_i ^{\min\{\gamma-\alpha,\alpha\}-2\rho}\right)^{\frac{1}{2}}.\nonumber
\end{eqnarray}

When $\frac{1}{2}<H<1$, let $\theta=\min\left\{\frac{\gamma-\alpha}{\alpha},1\right\}$. We have
\begin{eqnarray*}
J_1&\lesssim&\left\|\sum^{m}_{j=1}\int^{t_{j+1}}_{t_j}A^{-\frac{\alpha}{2}}\sin\left(A^{\frac{\alpha}{2}}(t_{m+1}-s)\right)\right.\nonumber\\
&&\times\left.\left[\int^s_{t_j}f'_N\left(u^N(r)\right)v^N(r)\mathrm{d}r-\int^s_{t_j}f'_N\left(u^N(t_j)\right)v^N(t_j)\mathrm{d}r\right]\mathrm{d}s\right\|_{L^2(D,U)}\nonumber\\
&\lesssim&\left\|\sum^{m}_{j=1}\int^{t_{j+1}}_{t_j}A^{-\frac{\alpha}{2}}\sin\left(A^{\frac{\alpha}{2}}(t_{m+1}-s)\right)\right.\\
&&\times\left.\int^s_{t_j}\left(f'_N\left(u^N(r)\right)-f'_N\left(u^N(t_j)\right)\right)v^N(r)\mathrm{d}r\mathrm{d}s\right\|_{L^2(D,U)}\nonumber\\
&&+\left\|\sum^{m}_{j=1}\int^{t_{j+1}}_{t_j}A^{-\frac{\alpha}{2}}\sin\left(A^{\frac{\alpha}{2}}(t_{m+1}-s)\right)\right.\nonumber\\
&&\times\left.\int^s_{t_j}f'_N\left(u^N(t_j)\right)\left(\cos\left(A^{\frac{\alpha}{2}}(r-t_j)\right)-I\right)v^N(t_j)\mathrm{d}r\mathrm{d}s\right\|_{L^2(D,U)}\nonumber\\
&&+\left\|\sum^{m}_{j=1}\int^{t_{j+1}}_{t_j}A^{-\frac{\alpha}{2}}\sin\left(A^{\frac{\alpha}{2}}(t_{m+1}-s)\right)\right.\nonumber\\
&&\times\left.\int^s_{t_j}f'_N\left(u^N(t_j)\right)\left(v^N(r)-\cos\left(A^{\frac{\alpha}{2}}(r-t_j)\right)v^N(t_j)\right)\mathrm{d}r\mathrm{d}s\right\|_{L^2(D,U)}\nonumber\\
&\lesssim&\sum^{m}_{j=1}\int^{t_{j+1}}_{t_j}\int^s_{t_j}\left\|\left(u^N(r)-u^N(t_j)\right)\times v^N(r)\right\|_{L^2(D,U)}\mathrm{d}r\mathrm{d}s\nonumber\\
&&+\sum^{m}_{j=1}\tau^\theta\int^{t_{j+1}}_{t_j}\int^s_{t_j}\left\|A^{\frac{\theta\alpha}{2}}v^N(t_j)\right\|_{L^2(D,U)}\mathrm{d}r\mathrm{d}s+II\nonumber\\
&\lesssim&\sum^{m}_{j=1}\int^{t_{j+1}}_{t_j}\int^s_{t_j}\int^r_{t_j}\left\|v^N(t)v^N(r)\right\|_{L^2(D,U)}\mathrm{d}t\mathrm{d}r\mathrm{d}s\\
&&+\sum^{m}_{j=1}\tau^{2+\theta}\left\|A^{\frac{\theta\alpha}{2}}v^N(t_j)\right\|_{L^2(D,U)}+II\nonumber\\
&\lesssim&\sum^{m}_{j=1}\int^{t_{j+1}}_{t_j}\int^s_{t_j}\int^r_{t_j}\left\|\left|v^N(t)\right|^2+\left|v^N(r)\right|^2\right\|_{L^2(D,U)}\mathrm{d}t\mathrm{d}r\mathrm{d}s\\
&&+\sum^{m}_{j=1}\tau^{2+\theta}\left\|A^{\frac{\theta\alpha}{2}}v^N(t_j)\right\|_{L^2(D,U)}+II\nonumber\\
&\lesssim&\tau^2\left(1+\left\|u_0\right\|^2_{L^4(D,\dot{U}^\gamma)}+\left\|v_0\right\|^2_{L^4(D,\dot{U}^{\gamma-\alpha})}\right)+\sum^{m}_{j=1}\tau^{2+\theta}\left\|A^{\frac{\theta\alpha}{2}}v^N(t_j)\right\|_{L^2(D,U)}+II.\\
\end{eqnarray*}

The condition $\gamma>\alpha$ implies that $\rho>\frac{d}{4}$. Then combining the fact that $\{\beta^{i}_{H}(t)\}_{i\in\mathbb{N}}$ are mutually independent, Lemma \ref{le:02}, and \eqref{eq:4.7-1}, we have
\begin{eqnarray*}
II&=&\left\|\sum^{m}_{j=1}\int^{t_{j+1}}_{t_j}A^{-\frac{\alpha}{2}}\sin\left(A^{\frac{\alpha}{2}}(t_{m+1}-s)\right)\int^s_{t_j}f'_N\left(u^N(t_j)\right)\right.\\
&&\times\left.\left[-A^{\frac{\alpha}{2}}\sin\left(A^{\frac{\alpha}{2}}(r-t_j)\right)u^N(t_j)+\int^{r}_{t_j}\cos\left(A^{\frac{\alpha}{2}}(r-y)\right)f_N\left(u^N(y)\right)\mathrm{d}y\right.\right.\\
&&+\left.\left.\int^{r}_{t_j}\sum^{N_1}_{i=1}\cos\left(\lambda_i^{\frac{\alpha}{2}}(r-y)\right)\sigma_i\phi_i(x)\mathrm{d}\beta_H^i(y)\right]\mathrm{d}r\mathrm{d}s\right\|_{L^2(D,U)}\\
&\lesssim&\tau^{2+\theta}\sum^{m}_{j=1}\left\|A^{\frac{\alpha(\theta+1)}{2}}u^N(t_j)\right\|_{L^2(D,U)}+\tau^2\left(T^H+\left\|u_0\right\|_{L^2(D,\dot{U}^\gamma)}+\left\|v_0\right\|_{L^2(D,\dot{U}^{\gamma-\alpha})}\right)\\
&&+\left(\sum^{N_1}_{i=1}\lambda_i^{-2\rho}\mathrm{E}\left[\left\|\sum^{m}_{j=1}\int^{t_{j+1}}_{t_j}A^{-\frac{\alpha}{2}}\sin\left(A^{\frac{\alpha}{2}}(t_{m+1}-s)\right)\int^s_{t_j}f'_N\left(u^N(t_j)\right)\right.\right.\right.\\
&&\times\left.\left.\left.\int^{r}_{t_j}\cos\left(\lambda_i^{\frac{\alpha}{2}}(r-y)\right)\phi_i(x)\mathrm{d}\beta_H^i(y)\mathrm{d}r\mathrm{d}s\right\|^2\right]\right)^{\frac{1}{2}}\\
&\lesssim&\left(\sum^{N_1}_{i=1}\lambda_i^{-2\rho}\mathrm{E}\left[\int_D\sum^{m}_{j=1}\sum^{m}_{k=1}\int^{t_{j+1}}_{t_j}\int^s_{t_j}\int^{t_{k+1}}_{k_j}\int^t_{k_j}\int^{r}_{t_j}\int^{r_1}_{t_k}A^{-\frac{\alpha}{2}}\sin\left(A^{\frac{\alpha}{2}}(t_{m+1}-s)\right)\right.\right.\\
&&\times\left.\left.f'_N\left(u^N(t_j)\right)\cos\left(\lambda_i^{\frac{\alpha}{2}}(r-y)\right)\phi_i(x)A^{-\frac{\alpha}{2}}\sin\left(A^{\frac{\alpha}{2}}(t_{m+1}-t)\right)f'_N\left(u^N(t_k)\right)\right.\right.\\
&&\times\left.\left.\cos\left(\lambda_i^{\frac{\alpha}{2}}(r_1-y_1)\right)\phi_i(x)|y-y_1|^{2H-2}\mathrm{d}y\mathrm{d}y_1\mathrm{d}r\mathrm{d}s\mathrm{d}r_1\mathrm{d}t\mathrm{d}x\right]\right)^{\frac{1}{2}}\\
&&+\tau^{2+\theta}\sum^{m}_{j=1}\left\|A^{\frac{\alpha(\theta+1)}{2}}u^N(t_j)\right\|_{L^2(D,U)}\\
&\lesssim&\left(\tau^4\sum^{m}_{j=1}\sum^{m}_{k=1}\mathrm{E}\left[\left\|f'_N\left(u^N(t_j)\right)\phi_i(x)\right\|\left\|f'_N\left(u^N(t_k)\right)\phi_i(x)\right\|\right]\right.\\
&&\times\left.\int^{t_{j+1}}_{t_j}\int^{t_{k+1}}_{t_k}|y-y_1|^{2H-2}\mathrm{d}y\mathrm{d}y_1\right)^{\frac{1}{2}}+\tau^{2+\theta}\sum^{m}_{j=1}\left\|A^{\frac{\alpha(\theta+1)}{2}}u^N(t_j)\right\|_{L^2(D,U)}.
\end{eqnarray*}
Combining the above estimates and Corollary \ref{co:3} leads to
\begin{eqnarray*}
J_1&\lesssim & \tau^{\frac{\gamma}{\alpha}}\left(\frac{T^H}{\varepsilon}+\left\|u_0\right\|_{L^2(D,\dot{U}^\gamma)}+\left\|v_0\right\|_{L^2(D,\dot{U}^{\gamma-\alpha})}\right.\\
&&+\left.\left\|u_0\right\|^2_{L^4(D,\dot{U}^\gamma)}+\left\|v_0\right\|^2_{L^4(D,\dot{U}^{\gamma-\alpha})}\right),
\ \alpha<\gamma\le2\alpha
\end{eqnarray*}
and
\begin{eqnarray*}
J_1&\lesssim&\tau^{2}\left(T^H+\left\|u_0\right\|_{L^2(D,\dot{U}^\gamma)}+\left\|v_0\right\|_{L^2(D,\dot{U}^{\gamma-\alpha})}\right.\\
&&+\left.\left\|u_0\right\|^2_{L^4(D,\dot{U}^\gamma)}+\left\|v_0\right\|^2_{L^4(D,\dot{U}^{\gamma-\alpha})}\right),\ \gamma>2\alpha.
\end{eqnarray*}
Similar to $J_1$, one gets
\begin{eqnarray*}
J_2&\lesssim&\left\|\sum^{m}_{j=1}\frac{ A^{-\frac{\alpha}{2}}\int^{t_{j+1}}_{t_j}\int^{t_{j+1}}_{s}\sin\left(A^{\frac{\alpha}{2}}(t_{m+1}-r)\right)\mathrm{d}r\mathrm{d}s}{\tau}\left[f_N\left(u^N(t_{j-1})\right)\right.\right.\\
&&-\left.\left.f_N\left(u^N(t_j)\right)+\tau f'_N\left(u^N(t_j)\right)v^N(t_j)\right]\right\|_{L^2(D,U)}\\
&&+\tau\sum^{m}_{j=1}\left\|\left[f_N\left(u^N(t_j)\right)-f_N\left(u^N(t_{j-1})\right)-f_N\left(u^N_j\right)+f_N\left(u^N_{j-1}\right)\right]\right\|_{L^2(D,U)}\\
&\lesssim&\left\|\sum^{m}_{j=1}\frac{ A^{-\frac{\alpha}{2}}\int^{t_{j+1}}_{t_j}\int^{t_{j+1}}_{s}\sin\left(A^{\frac{\alpha}{2}}(t_{m+1}-r)\right)\mathrm{d}r\mathrm{d}s}{\tau}\right.\\
&&\times\left.\left[\int^{t_j}_{t_{j-1}}f'_N\left(u^N(t_j)\right)v^N(t_j)\mathrm{d}r-\int^{t_j}_{t_{j-1}}f'_N\left(u^N(r)\right)v^N(t_j)\mathrm{d}r\right]\right\|_{L^2(D,U)}\\
&&+\left\|\sum^{m}_{j=1}\frac{ A^{-\frac{\alpha}{2}}\int^{t_{j+1}}_{t_j}\int^{t_{j+1}}_{s}\sin\left(A^{\frac{\alpha}{2}}(t_{m+1}-r)\right)\mathrm{d}r\mathrm{d}s}{\tau}\right.\\
&&\times\left.\left[\int^{t_j}_{t_{j-1}}f'_N\left(u^N(r)\right)\left(v^N(t_j)-\cos\left(A^{\frac{\alpha}{2}}(t_{j}-r)\right) v^N(r)\right)\mathrm{d}r\right]\right\|_{L^2(D,U)}\\
&&+\left\|\sum^{m}_{j=1}\frac{ A^{-\frac{\alpha}{2}}\int^{t_{j+1}}_{t_j}\int^{t_{j+1}}_{s}\sin\left(A^{\frac{\alpha}{2}}(t_{m+1}-r)\right)\mathrm{d}r\mathrm{d}s}{\tau}\right.\\
&&\times\left.\left[\int^{t_j}_{t_{j-1}}f'_N\left(u^N(r)\right)\left(\cos\left(A^{\frac{\alpha}{2}}(t_{j}-r)\right)-I\right)v^N(r)\mathrm{d}r\right]\right\|_{L^2(D,U)}\\
&&+\tau\sum^{m}_{j=0}\left\|u^N(t_j)-u^N_j\right\|_{L^2(D,U)}.
\end{eqnarray*}
For $\alpha<\gamma\le2\alpha$, we have
\begin{eqnarray*}
J_2&\lesssim&\tau^{\frac{\gamma}{\alpha}}\left(\frac{T^H}{\varepsilon}+\left\|u_0\right\|_{L^2(D,\dot{U}^\gamma)}+\left\|v_0\right\|_{L^2(D,\dot{U}^{\gamma-\alpha})}\right.\\
&+&\left.\left\|u_0\right\|^2_{L^4(D,\dot{U}^\gamma)}+\left\|v_0\right\|^2_{L^4(D,\dot{U}^{\gamma-\alpha})}\right)+\tau\sum^{m}_{j=0}\left\|u^N(t_j)-u^N_j\right\|_{L^2(D,U)}.
\end{eqnarray*}
When $\gamma>2\alpha$, we also have
\begin{eqnarray*}
J_2&\lesssim&\tau^{2}\left(T^H+\left\|u_0\right\|_{L^2(D,\dot{U}^\gamma)}+\left\|v_0\right\|_{L^2(D,\dot{U}^{\gamma-\alpha})}\right.\\
&+&\left.\left\|u_0\right\|^2_{L^4(D,\dot{U}^\gamma)}+\left\|v_0\right\|^2_{L^4(D,\dot{U}^{\gamma-\alpha})}\right)+\tau\sum^{m}_{j=0}\left\|u^N(t_j)-u^N_j\right\|_{L^2(D,U)}. \end{eqnarray*}
Combining \eqref{eq:5.9-2}, $J_1$, and $J_2$ leads to
\begin{eqnarray*}\label{eq:5.4}
&&\left\|u^N(t_{m+1})-u^N_{m+1}\right\|_{L^2(D,U)}\\
&&\lesssim\tau^{\frac{\gamma}{\alpha}}\left(\frac{T^H}{\varepsilon}+\left\|u_0\right\|_{L^2(D,\dot{U}^\gamma)}+\left\|v_0\right\|_{L^2(D,\dot{U}^{\gamma-\alpha})}\right.\\
&&~~~+\left.\left\|u_0\right\|^2_{L^4(D,\dot{U}^\gamma)}+\left\|v_0\right\|^2_{L^4(D,\dot{U}^{\gamma-\alpha})}\right),\alpha<\gamma\le2\alpha
\end{eqnarray*}
and
\begin{eqnarray*}\label{eq:5.5}
&&\left\|u^N(t_{m+1})-u^N_{m+1}\right\|_{L^2(D,U)}\\
&&\lesssim\tau^{2}\left(T^H+\left\|u_0\right\|_{L^2(D,\dot{U}^\gamma)}+\left\|v_0\right\|_{L^2(D,\dot{U}^{\gamma-\alpha})}\right.\\
&&~~~+\left.\left\|u_0\right\|^2_{L^4(D,\dot{U}^\gamma)}+\left\|v_0\right\|^2_{L^4(D,\dot{U}^{\gamma-\alpha})}\right), \gamma>2\alpha.
\end{eqnarray*}

When $H=\frac{1}{2}$, using the same steps in \eqref{eq:5.9-2}, we get
\begin{eqnarray*}
&&\left\|u^N(t_{m+1})-u^N_{m+1}\right\|_{L^2(D,U)}\\
&&\lesssim I_1+I_2+\sum^{m}_{j=0}\tau\left\|u^N(t_j)-u^N_j\right\|_{L^2(D,U)}+\tau^2.
\end{eqnarray*}
For $\alpha<\gamma\le2\alpha$, using Corollary \ref{co:3} and the assumptions of $f$, we obtain
\begin{eqnarray}\label{eq:C2}
I_1&\lesssim&\left\|\sum^{m}_{j=1}\int^{t_{j+1}}_{t_j}A^{-\frac{\alpha}{2}}\sin\left(A^{\frac{\alpha}{2}}(t_{m+1}-s)\right)\right.\\
&&\times\left.\int^s_{t_j}\left(f'_N\left(u^N(r)\right)-f'_N\left(u^N(t_j)\right)\right)v^N(r)\mathrm{d}r\mathrm{d}s\right\|_{L^2(D,U)}\nonumber\\
&&+\left\|\sum^{m}_{j=1}\int^{t_{j+1}}_{t_j}A^{-\frac{\alpha}{2}}\sin\left(A^{\frac{\alpha}{2}}(t_{m+1}-s)\right)\right.\nonumber\\
&&\times\left.\int^s_{t_j}f'_N\left(u^N(t_j)\right)\left(\cos\left(A^{\frac{\alpha}{2}}(r-t_j)\right)-I\right)v^N(t_j)\mathrm{d}r\mathrm{d}s\right\|_{L^2(D,U)}\nonumber\\
&&+\left\|\sum^{m}_{j=1}\int^{t_{j+1}}_{t_j}A^{-\frac{\alpha}{2}}\sin\left(A^{\frac{\alpha}{2}}(t_{m+1}-s)\right)\right.\nonumber\\
&&\times\left.\int^s_{t_j}f'_N\left(u^N(t_j)\right)\left(v^N(r)-\cos\left(A^{\frac{\alpha}{2}}(r-t_j)\right)v^N(t_j)\right)\mathrm{d}r\mathrm{d}s\right\|_{L^2(D,U)}\nonumber\\
&\lesssim&\sum^{m}_{j=1}\int^{t_{j+1}}_{t_j}\int^s_{t_j}\left\|\left(u^N(r)-u^N(t_j)\right)\times v^N(r)\right\|_{L^2(D,U)}\mathrm{d}r\mathrm{d}s\nonumber\\
&&+\sum^{m}_{j=1}\tau^{\frac{\gamma-\alpha}{\alpha}}\int^{t_{j+1}}_{t_j}\int^s_{t_j}\left\|A^{\frac{\gamma-\alpha}{2}}v^N(t_j)\right\|_{L^2(D,U)}\mathrm{d}r\mathrm{d}s+III\nonumber\\
&\lesssim&\tau^{\frac{\gamma}{\alpha}}\left(\frac{T^H}{\varepsilon}+\left\|u_0\right\|_{L^2(D,\dot{U}^\gamma)}+\left\|v_0\right\|_{L^2(D,\dot{U}^{\gamma-\alpha})}\right.\nonumber\\
&&+\left.\left\|u_0\right\|^2_{L^4(D,\dot{U}^\gamma)}+\left\|v_0\right\|^2_{L^4(D,\dot{U}^{\gamma-\alpha})}\right)+III.\nonumber
\end{eqnarray}
Combining the fact that $\{\beta^{i}_{H}(t)\}_{i\in\mathbb{N}}$ are mutually independent and Equation \eqref{eq:4.7-1}, we have
\begin{eqnarray*}
III&=&\left\|\sum^{m}_{j=1}\int^{t_{j+1}}_{t_j}A^{-\frac{\alpha}{2}}\sin\left(A^{\frac{\alpha}{2}}(t_{m+1}-s)\right)\int^s_{t_j}f'_N\left(u^N(t_j)\right)\right.\\
&&\times\left.\left[-A^{\frac{\alpha}{2}}\sin\left(A^{\frac{\alpha}{2}}(r-t_j)\right)u^N(t_j)+\int^{r}_{t_j}\cos\left(A^{\frac{\alpha}{2}}(r-y)\right)f_N\left(u^N(y)\right)\mathrm{d}y\right.\right.\\
&&+\left.\left.\int^{r}_{t_j}\sum^{N_1}_{i=1}\cos\left(\lambda_i^{\frac{\alpha}{2}}(r-y)\right)\sigma_i\phi_i(x)\mathrm{d}\beta^i(y)\right]\mathrm{d}r\mathrm{d}s\right\|_{L^2(D,U)}\\
&\lesssim&\tau^{1+\frac{\gamma}{\alpha}}\sum^{m}_{j=1}\left\|A^{\frac{\gamma}{2}}u^N(t_j)\right\|_{L^2(D,U)}+\tau^2\left(T^H+\left\|u_0\right\|_{L^2(D,\dot{U}^\gamma)}+\left\|v_0\right\|_{L^2(D,\dot{U}^{\gamma-\alpha})}\right)\\
&&+\left(\sum^{m}_{j=1}\mathrm{E}\left[\left\|\int^{t_{j+1}}_{t_j}\sum^{N_1}_{i=1}\lambda_i^{-\frac{\alpha}{2}}\sin\left(\lambda_i^{\frac{\alpha}{2}}(t_{m+1}-s)\right)\int^s_{t_j}f'_N\left(u^N(t_j)\right)\right.\right.\right.\\
&&\times\left.\left.\left.\int^{r}_{t_j}\cos\left(\lambda_i^{\frac{\alpha}{2}}(r-y)\right)\sigma_i\phi_i(x)\mathrm{d}\beta^i(y)\mathrm{d}r\mathrm{d}s\right\|^2\right]\right)^{\frac{1}{2}}\\
&\lesssim&\tau^{\frac{\gamma}{\alpha}}\left(\frac{T^H}{\varepsilon}+\left\|u_0\right\|_{L^2(D,\dot{U}^\gamma)}+\left\|v_0\right\|_{L^2(D,\dot{U}^{\gamma-\alpha})}\right).
\end{eqnarray*}
In first inequality, we employ the fact that Brownian motion is a process with independent increment, that is
\begin{eqnarray*}
&\mathrm{E}&\left[\int^{t_{j+1}}_{t_j}\int^s_{t_j}f'_N\left(u^N(t_j)\right)\int^{r}_{t_j}\cos\left(\lambda_i^{\frac{\alpha}{2}}(r-y)\right)\mathrm{d}\beta^i(y)\mathrm{d}r\mathrm{d}s\right.\\
&& \times \left.\int^{t_{k+1}}_{t_k}\int^s_{t_k}f'_N\left(u^N(t_k)\right)\int^{r}_{t_k}\cos\left(\lambda_i^{\frac{\alpha}{2}}(r-y)\right)\mathrm{d}\beta^i(y)\mathrm{d}r\mathrm{d}s\right]=0,\ j\ne k.
\end{eqnarray*}
Then
\begin{eqnarray*}
I_1&\lesssim&\tau^{\frac{\gamma}{\alpha}}\left(\frac{T^H}{\varepsilon}+\left\|u_0\right\|_{L^2(D,\dot{U}^\gamma)}+\left\|v_0\right\|_{L^2(D,\dot{U}^{\gamma-\alpha})}+\left\|u_0\right\|^2_{L^4(D,\dot{U}^\gamma)}+\left\|v_0\right\|^2_{L^4(D,\dot{U}^{\gamma-\alpha})}\right).
\end{eqnarray*}
Similar to $J_2$ and $I_1$, we have
\begin{eqnarray*}
I_2&\lesssim&\tau^{\frac{\gamma}{\alpha}}\left(\frac{T^H}{\varepsilon}+\left\|u_0\right\|_{L^2(D,\dot{U}^\gamma)}+\left\|v_0\right\|_{L^2(D,\dot{U}^{\gamma-\alpha})}+\left\|u_0\right\|^2_{L^4(D,\dot{U}^\gamma)}\right.\\
&&+\left.\left\|v_0\right\|^2_{L^4(D,\dot{U}^{\gamma-\alpha})}\right)+\tau\sum^{m}_{j=0}\left\|u^N(t_j)-u^N_j\right\|_{L^2(D,U)}.
\end{eqnarray*}
If $\gamma>2\alpha$, then
\begin{eqnarray*}
I_1&\lesssim&\tau^{2}\left(T^H+\left\|u_0\right\|_{L^2(D,\dot{U}^\gamma)}+\left\|v_0\right\|_{L^2(D,\dot{U}^{\gamma-\alpha})}+\left\|u_0\right\|^2_{L^4(D,\dot{U}^\gamma)}+\left\|v_0\right\|^2_{L^4(D,\dot{U}^{\gamma-\alpha})}\right)
\end{eqnarray*}
and
\begin{eqnarray*}
I_2&\lesssim&\tau^{2}\left(T^H+\left\|u_0\right\|_{L^2(D,\dot{U}^\gamma)}+\left\|v_0\right\|_{L^2(D,\dot{U}^{\gamma-\alpha})}+\left\|u_0\right\|^2_{L^4(D,\dot{U}^\gamma)}+\left\|v_0\right\|^2_{L^4(D,\dot{U}^{\gamma-\alpha})}\right)\\
&&+\tau\sum^{m}_{j=0}\left\|u^N(t_j)-u^N_j\right\|_{L^2(D,U)}.
\end{eqnarray*}
Using the above estimates and the discrete Gr\"onwall inequality, we obtain %the following results:
\begin{eqnarray}\label{eq:C3}
&&\left\|u^N(t_{m+1})-u^N_{m+1}\right\|_{L^2(D,U)}\\
&&\lesssim\tau^{\frac{\gamma}{\alpha}}\left(\frac{T^H}{\varepsilon}+\left\|u_0\right\|_{L^2(D,\dot{U}^\gamma)}+\left\|v_0\right\|_{L^2(D,\dot{U}^{\gamma-\alpha})}\right.\nonumber\\
&&~~~+\left.\left\|u_0\right\|^2_{L^4(D,\dot{U}^\gamma)}+\left\|v_0\right\|^2_{L^4(D,\dot{U}^{\gamma-\alpha})}\right), \alpha<\gamma\le2\alpha\nonumber
\end{eqnarray}
and
\begin{eqnarray}\label{eq:C4}
&&\left\|u^N(t_{m+1})-u^N_{m+1}\right\|_{L^2(D,U)}\\
&&\lesssim\tau^{2}\left(T^H+\left\|u_0\right\|_{L^2(D,\dot{U}^\gamma)}+\left\|v_0\right\|_{L^2(D,\dot{U}^{\gamma-\alpha})}\right.\nonumber\\
&&~~~+\left.\left\|u_0\right\|^2_{L^4(D,\dot{U}^\gamma)}+\left\|v_0\right\|^2_{L^4(D,\dot{U}^{\gamma-\alpha})}\right),\gamma>2\alpha.\nonumber
\end{eqnarray}
Take $0<\tau<1$ and $\varepsilon=\frac{1}{|\log(\tau)|}$. Combining above estimates and Theorem \ref{th:5}, we obtain the desired results.
\end{proof}

\section{Definitions of the cosine and sine operators}\label{sec:A}

In term of the eigenpairs $\left\{\left(\lambda_i,\phi_i\right)\right\}_{i=1}^\infty$, the cosine and sine operators can be expressed as
\begin{eqnarray*}
\sin\left(A^\alpha t\right)u(t)
&=&\sum^\infty_{i=1}\sin\left(\lambda_i^\alpha t\right)\left\langle u(t),\phi_{i}(x)\right\rangle\phi_{i}(x)\\
&=&\sum^\infty_{i=1}\sum^\infty_{j=1}(-1)^{j-1}\frac{\left(\lambda_i^\alpha t\right)^{2j-1}}{(2j-1)!}\left\langle u(t),\phi_{i}(x)\right\rangle\phi_{i}(x)
\end{eqnarray*}
and
\begin{eqnarray*}
\cos\left(A^\alpha t\right)u(t)
&=&\sum^\infty_{i=1}\cos\left(\lambda_i^\alpha t\right)\left\langle u(t),\phi_{i}(x)\right\rangle\phi_{i}(x)\\
&=&\sum^\infty_{i=1}\sum^\infty_{j=0}(-1)^{j}\frac{\left(\lambda_i^\alpha t\right)^{2j}}{(2j)!}\left\langle u(t),\phi_{i}(x)\right\rangle\phi_{i}(x).
\end{eqnarray*}

\section{Simulation of stochastic integral for fBm}\label{sec:B}

Suppose $0\le t_1\le \dots\le t_{m}\le \dots\le t_{M}=T$ $(m=1,2,\dots, M-1)$ and the fixed sizes of the mesh $\tau=t_{m+1}-t_{m}$. Let's consider the following vector
\begin{equation*}
Z=\left(\int^{t_{1}}_{0}s\mathrm{d}\beta_H(s),\int^{t_{2}}_{t_{1}}(s-t_1)\mathrm{d}\beta_H(s),\dots,
\int^{t_{M}}_{t_{M-1}}(s-t_{M-1})\mathrm{d}\beta_H(s)\right).
\end{equation*}
The stochastic integral $\int^{t_{m+1}}_{t_{m}}(s-t_m)\mathrm{d}\beta_H(s)$ is a Gaussian process with mean 0. The Cholesky method can be applied to stationary and non-stationary Gaussian processes. Thus, we use
the Ckolesky method to simulate \eqref{eq:5.9-0}.
The probability distribution of the vector $Z$ is normal with mean 0 and the covariance matrix $\Sigma$. Let $\Sigma_{i,j}$ be the element of row $i$, column $j$ of matrix $\Sigma$. Then
\begin{eqnarray*}
\Sigma_{i,j}&=&\mathrm{E}\left[\int^{t_{j+1}}_{t_j}(s-t_j)\mathrm{d}\beta_H(s)\int^{t_{k+1}}_{t_{k}}(t-t_k)\mathrm{d}\beta_H(t)\right].
\end{eqnarray*}
By using Lemma \ref{le:02}, for $j>k$, we have
\begin{eqnarray*}
&\mathrm{E}&\left[\int^{t_{j+1}}_{t_j}(s-t_j)\mathrm{d}B_H(s)\int^{t_{k+1}}_{t_{k}}(t-t_k)\mathrm{d}B_H(t)\right]\nonumber\\
&=&H(2H-2)\int^{t_{j+1}}_{t_j}\int^{t_{k+1}}_{t_{k}}(s-t_j)(t-t_k)(s-t)^{2H-2}\mathrm{d}t\mathrm{d}s\nonumber\\
&=&-\frac{\tau^2}{2}(t_{j+1}-t_{k+1})^{2H}+\frac{\tau}{2(2H+1)}\left((t_{j+1}-t_{k})^{2H+1}-(t_{j}-t_{k+1})^{2H+1}\right)\nonumber\\
&&-\frac{1}{2(2H+1)(2H+2)}\left((t_{j+1}-t_{k})^{2H+2}-2(t_{j}-t_{k})^{2H+2}+(t_{j}-t_{k+1})^{2H+2}\right)\\
&=&-\frac{\tau^{2+2H}}{2}(j-k)^{2H}+\frac{\tau^{2+2H}}{2(2H+1)}\left((j+1-k)^{2H+1}-(j-k-1)^{2H+1}\right)\nonumber\\
&&-\frac{\tau^{2+2H}}{2(2H+1)(2H+2)}\left((j+1-k)^{2H+2}-2(j-k)^{2H+2}+(j-k-1)^{2H+2}\right).
\end{eqnarray*}
When $j=k$,
\begin{eqnarray*}
\mathrm{E}\left[\int^{t_{j+1}}_{t_j}(s-t_j)\mathrm{d}B_H(s)\int^{t_{j+1}}_{t_{j}}(t-t_j)\mathrm{d}B_H(t)\right]=\frac{\tau^{2H+2}}{2H+2}.
\end{eqnarray*}
When $\Sigma$ is a symmetric positive matrix, the covariance matrix $\Sigma$ can be written as $L(M)L(M)'$, where the matrix $L(M)$ is lower triangular matrix and the matrix $L(M)'$ is the transpose of $L(M)$. Let $V=(V_1,V_2,\dots,V_M)$. The elements of the vector $V$ are a sequence of independent and identically distributed standard normal random variables. Since $Z=L(M)V$, then $Z$  can be simulated.
Let $l_{i,j}$ be the element of row $i$, column $j$ of matrix $L(M)$. That is,
\begin{equation*}
\Sigma_{i,j}=\sum^j_{k=1}l_{i,k}l_{j,k}, \quad j\le i.
\end{equation*}
As $i=j=1$, we have $l^2_{1,1}=\Sigma_{1,1}$. The $l_{i,j}$ satisfies
\begin{eqnarray*}
l_{i+1,1}&=&\frac{\Sigma_{i+1,1}}{l_{1,1}},\\ l^2_{i+1,i+1}&=&\Sigma_{i+1,i+1}-\sum^{i}_{k=1}l^2_{i+1,k},\\
l_{i+1,j}&=&\frac{1}{l_{j,j}}\left(\Sigma_{i+1,j}-\sum^{j-1}_{k=1}l_{i+1,k}l_{j,k}\right),\quad1<j\le i.
\end{eqnarray*}

\bibliographystyle{siamplain}
%\bibliography{references}

\end{document}